\numberwithin{equation}{section}
\newtheorem{theorem}{Theorem}[section]
\newtheorem{lemma}[theorem]{Lemma}
\newtheorem{proposition}[theorem]{Proposition}
\newtheorem{corollary}[theorem]{Corollary}
\theoremstyle{definition}
\newtheorem{definition}[theorem]{Definition}
\newtheorem{remark}[theorem]{Remark}
\newtheorem{remark and definition}[theorem]{Remark and Definition}
\newtheorem{remark and notation}[theorem]{Remark and Notation}
\newtheorem{notation}[theorem]{Notation}
\newtheorem{example}[theorem]{Example}
\newtheorem{question}[theorem]{Question}
\newcommand\Hom{\operatorname{Hom}}
\newcommand\Ext{\operatorname{Ext}}
\newcommand\Tor{\operatorname{Tor}}
\newcommand\depth{\operatorname{depth}}
\newcommand\grade{\operatorname{grade}}
\newcommand\Ker{\operatorname{\Ker}}
\newcommand\pd{\operatorname{pd}}
\newcommand\id{\operatorname{id}}
\newcommand\Supp{\operatorname{Supp}}
\newcommand\Ass{\operatorname{Ass}}
\newcommand\Ann{\operatorname{Ann}}
\newcommand\cd{\operatorname{cd}}
\newcommand{\xx}{\underline x}
\newcommand{\qism}{\stackrel{\sim}{\longrightarrow}}
\newcommand\RHom{\operatorname{R Hom}}
\newcommand{\Rgam}{{\rm R} \Gamma_{\mathfrak{a}}}
\newcommand{\RGam}{{\rm R} \Gamma_{\mathfrak{m}}}
\author[Freitas,\, Jorge-P\'erez,\, Miranda-Neto,\, Schenzel]{Thiago H. Freitas,\, Victor H. Jorge-P\'erez,\, Cleto B. Miranda-Neto,\, Peter Schenzel}
\title[Generalized Local Cohomology]{Generalized local duality, canonical modules, and prescribed bound on projective dimension}
\address{Universidade Tecnol\'ogica Federal do Paran\'a, 85053--525, Guarapuava-PR, Brazil}
\email{freitas.thf@gmail.com}
\address{Universidade de S{\~a}o Paulo -
ICMC, Caixa Postal 668, 13560-970, S{\~a}o Carlos-SP, Brazil}
\email{vhjperez@icmc.usp.br}
\address{Universidade Federal da Para\'iba - 58051-900, Jo\~ao Pessoa, PB, Brazil}
\email{cleto@mat.ufpb.br}
\address{Martin-Luther-Universit\"at Halle-Wittenberg, Institut f\"ur Informatik, D--06 099 Halle (Saale), Germany}
\email{schenzel@informatik.uni-halle.de}
\date{\today}
\thanks{{\it Corresponding author}: C. B. Miranda-Neto (cleto@mat.ufpb.br).}
\keywords{Generalized local cohomology, local duality, canonical module, finite projective dimension, free module}
\subjclass[2020]{Primary: 13D45, 13D07, 13C10, 13C14; Secondary: 13D05, 13D02, 13H10, 14B15.}
\begin{document}
\begin{abstract} We present various approaches to J. Herzog's theory of generalized local cohomology and explore its main aspects, e.g., (non-)vanishing results as well as a general local duality theorem which extends, to a much broader class of rings, previous results by Herzog-Zamani and Suzuki. As an application, we establish a prescribed upper bound for the projective dimension of a module satisfying suitable cohomological conditions, and we derive some freeness criteria and questions of Auslander-Reiten type. Along the way, we prove a new characterization of Cohen-Macaulay modules which truly relies on generalized local cohomology, and in addition we introduce and study a generalization of the notion of canonical module.

\end{abstract}

\maketitle

\section*{Introduction}

Our goal in this work is to develop approaches and improvements to the theory of generalized local cohomology initiated by Herzog \cite{jH}, and further developed by Herzog and Zamani \cite{HZ}, Suzuki \cite{nS}, and Yassemi \cite{yassemi}. Our 
main result extends a well-known generalized local duality theorem, stated originally over Cohen-Macaulay local rings with canonical module, to the much broader class of quotients of Gorenstein local rings (equivalently, rings possessing a dualizing complex). In addition, we introduce a generalization of the concepts of deficiency and canonical modules (cf.\,\cite[Section 1]{pS}), which is of interest on its own and in particular will serve as a convenient tool to some of our results.

Regarding applications, we focus mainly on establishing a prescribed bound for the (assumed finite) projective dimension of a module over a local ring possessing a dualizing complex and satisfying certain cohomological conditions. A bit more precisely, under suitable hypotheses and considering an integer $i\geq 1$ attached to the vanishing of finitely many appropriate cohomology modules, we derive that $i-1$ is an upper bound for the projective dimension of the given module. As a consequence, we detect some freeness criteria and raise questions in the spirit of the famous Auslander-Reiten conjecture, proposed about 45 years ago in  \cite{auslander} and which has been the objective of renewed attention by many authors (see, e.g., \cite{araya}, \cite{sgoto}, and \cite{hule}). Along the way, we also detect a new characterization of Cohen-Macaulay modules, in terms of the non-vanishing of a suitable local cohomology module; this is a legitimate application of generalized local cohomology theory as the result cannot be recovered by means of the ordinary tool (nor by any existing duality theorem), and therefore it constitutes, in our view, a particular compelling reason to consider broader forms of the classical theory.

In order to describe our main object of study, let us fix a convention that will be in force throughout the paper: by {\it ring} we shall tacitly mean a Noetherian commutative ring with identity $1\neq 0$. Now let  $\mathfrak{a}$ be an ideal of a ring $R$, and let $N$ be an $R$-module. For an integer $i\geq 0$, the $i$-th local cohomology module
$H^{i}_\mathfrak{a}(N)=\displaystyle \varinjlim{\rm Ext}_R^{i}(R/\mathfrak{a}^n, N)$ of $N$ with respect to $\mathfrak{a}$ has been -- needless to say -- widely studied with a view to a variety of important applications in commutative algebra and algebraic geometry. For details about this classical object, see \cite{aG}, also \cite{B-Sharp}, \cite{24h}. A natural generalization of this notion was introduced in Herzog  \cite{jH}. Precisely, if in addition we consider an $R$-module $M$, then the {\it $i$-th generalized local cohomology module of $M$, $N$ with respect to $\mathfrak{a}$} is $$H^{i}_\mathfrak{a}(M,N) \, = \, \displaystyle \varinjlim_{n}{\rm Ext}_R^{i}(M/\mathfrak{a}^nM,N).$$ Notice that the case $M=R$ retrieves ordinary local cohomology. Essentially, this paper presents new directions in the study of such modules, providing applications that in particular further justify their investigation.

Now let us briefly describe the contents of the paper.

Section 1 gives the central definitions 
and presents the first general results. For instance, Lemma \ref{thm-1} provides a number of quasi-isomorphic complexes yielding various ways to realize generalized local cohomology (see also Example \ref{cor-1}). Furthermore, under a suitable hypothesis, Corollary \ref{cor-3} describes $H^{i}_\mathfrak{a}(M,N)$ as an Ext module; this is used in Remark \ref{fails} to illustrate that the independence property (regarding base change) fails for generalized local cohomology modules.

Section 2 furnishes three spectral sequences as fundamental tools that allow for the determination of generalized local cohomology (see Proposition \ref{spec-1}), which in turn led us to the detection of useful properties -- e.g., (non-)vanishing results -- such as the ones given in Corollaries \ref{cor-4}, \ref{cor-5} and \ref{cor-6}.

Section 3 studies Cohen-Macaulay modules and generalized local duality.
For instance, Theorem \ref{theo01} characterizes cohomologically when the local ring $R$ (possessing a dualizing complex) and a given finitely generated $R$-module of finite projective dimension have the same Cohen-Macaulay defect. As a consequence, Corollary \ref{CM-charact} gives the above-mentioned new characterization of Cohen-Macaulay modules. The main result of the section, Theorem \ref{thm-3}, concerns duality and extends Suzuki's local duality theorem (cf.\,\cite{nS}). The case where the ring is Cohen-Macaulay is recorded in Corollaries \ref{cor} and \ref{cor1}, which in particular retrieve the local duality theorem of Herzog and Zamani \cite{HZ}.

Section 4 introduces generalized modules of deficiency and, particularly, the concept of generalized canonical module. Such tools play a role in conveniently describing generalized local cohomology modules in the Gorenstein case. Structural features such as the behavior of the generalized canonical module under localization (Proposition \ref{prop-3}), associated primes (Proposition \ref{lem-1}), and depth (Proposition \ref{prop-5}) are also investigated.

Finally, Section 5 describes the already mentioned method to produce, as a byproduct of our generalized local duality theorem, a prescribed upper bound for the (finite) projective dimension of a finitely generated module under suitable cohomological conditions (Theorem \ref{main-result} and Corollary \ref{main-cor}). In Remark \ref{Jo-rem} we observe a connection to Jorgensen \cite[Question 2.7]{jor} and we suggest a generalization of the problem in Question \ref{Jo-ques}. We then turn our attention to deriving freeness criteria, the main ones being stated in Corollaries \ref{cor1-free} and \ref{cor2-free}. Such criteria are of Auslander-Reiten type in the sense of bearing a visible similarity with the classical Auslander-Reiten conjecture (Remark \ref{AR-rem}). This motivated us to propose a new variant of the problem (see Question \ref{AR-ques}), which matches the original statement in the Gorenstein case.

\section{Definitions and First Results}

As anticipated in the introduction, by {\it ring} we always mean Noetherian commutative ring with $1\neq 0$.

Let $R$ denote a ring and let $\mathfrak{a}$ be an ideal of $R$ generated by the set $\xx = x_1,\ldots,x_k$. In addition, let $M,N$ be two
$R$-modules. As a  generalization of the local cohomology modules
$H^i_{\mathfrak{a}}(M)$, $i \geq 0,$ defined as the right derived functors of the
section functor $\Gamma_{\mathfrak{a}}(\cdot)$, i.e., $\Gamma_{\mathfrak{a}}(M)=
\{m\in M \mid \mathfrak{a}^sm=0 \text{ for some } s>0\}$ (see \cite{aG}, also \cite{B-Sharp}, \cite{24h}), Herzog introduced the following
tool (see \cite{jH}).

\begin{definition} \label{def-1}
	The {\it $i$-th generalized local cohomology module of $M,N$ with respect to $\mathfrak{a}$} is
	\[
	H^i_{\mathfrak{a}}(M,N) = \varinjlim_{n} \Ext^i_R(M/\mathfrak{a}^nM,N),
	\]
	where, for each $i\geq 0$, the direct system is induced by the natural maps $M/\mathfrak{a}^{n+1}M
	\to M/\mathfrak{a}^nM$.
\end{definition}

Taking
$M=R$ we readily get $H^i_{\mathfrak{a}}(N) = \varinjlim \Ext_R^i(R/\mathfrak{a}^n, N)= H^i_{\mathfrak{a}}(R, N)$, for all $i \geq 0$.

A first systematic approach has been done by
Suzuki (see \cite{nS}). Here we shall add some results and simplify the proof of
a few known facts. As a first vanishing result we have the following,
shown by Bijan-Zadeh (see \cite{BZm}).

\begin{proposition} \label{prop-1}
	Let $\mathfrak{a}$ be an ideal of a local ring $R$. Let
	$M,N$ be two finitely generated $R$-modules, and set\, $s = \grade(\mathfrak{a}+\Ann_R M,\,N)$. Then
	\[
	H^i_{\mathfrak{a}}(M,N) = 0 \quad \mbox{ for } \quad i <s, \quad \mbox{ and } \quad H^s_{\mathfrak{a}}(M,N) \not= 0.
	\]
	
\end{proposition}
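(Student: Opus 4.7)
The plan is to reduce first to the level of each individual Ext module in the direct system, and then use a connecting-homomorphism argument to descend non-vanishing down to the colimit. For every $n \geq 1$ the module $M/\mathfrak{a}^n M$ satisfies
\[
\Supp(M/\mathfrak{a}^n M) \;=\; \Supp(M) \cap V(\mathfrak{a}^n) \;=\; V(\mathfrak{a} + \Ann_R M),
\]
so $\Rad(\Ann_R(M/\mathfrak{a}^n M)) = \Rad(\mathfrak{a} + \Ann_R M)$. The classical Rees-type identification $\grade(\Ann_R L, N) = \inf\{i : \Ext^i_R(L, N) \neq 0\}$, valid for finitely generated $L, N$, then yields
\[
\inf\{i : \Ext^i_R(M/\mathfrak{a}^n M, N) \neq 0\} \;=\; s
\]
for every $n$. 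Taking direct limits, this immediately gives $H^i_\mathfrak{a}(M, N) = 0$ for $i < s$.

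For the non-vanishing of $H^s_\mathfrak{a}(M, N)$ the strategy is to show that all transition maps in the direct system are injective. I would apply $\Ext^\bullet_R(-, N)$ to
\[
0 \longrightarrow \mathfrak{a}^n M/\mathfrak{a}^{n+1} M \longrightarrow M/\mathfrak{a}^{n+1} M \longrightarrow M/\mathfrak{a}^n M \longrightarrow 0.
\]
The left-hand term is annihilated by $\mathfrak{a} + \Ann_R M$, so the Rees-type identification again forces $\Ext^{s-1}_R(\mathfrak{a}^n M/\mathfrak{a}^{n+1} M, N) = 0$. Hence the connecting homomorphism out of this term vanishes, whence the induced transition
\[
\Ext^s_R(M/\mathfrak{a}^n M, N) \;\hookrightarrow\; \Ext^s_R(M/\mathfrak{a}^{n+1} M, N)
\]
is injective for every $n$. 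Since each stage is nonzero, the colimit $H^s_\mathfrak{a}(M, N)$ is nonzero.

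The main obstacle is the non-vanishing half of the Rees-type identification, which I would either cite (e.g., from Bruns--Herzog) or reprove in context by localizing at a prime $\mathfrak{p} \supseteq \mathfrak{a} + \Ann_R M$ lying in $\Ass_R(N/\yy N)$ for a maximal $N$-sequence $\yy \subseteq \mathfrak{a} + \Ann_R M$; such a $\mathfrak{p}$ automatically satisfies $\mathfrak{p} \in \Supp(M/\mathfrak{a}^n M)$ and $\depth N_\mathfrak{p} = s$, and from there one detects a nonzero $\Ext^s$ by a short argument in the local ring $R_\mathfrak{p}$. Everything else in the proof is a routine exercise with long exact sequences and the fact that the forgetful functor from a direct system of injective maps to its colimit reflects non-vanishing.
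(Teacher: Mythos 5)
Your argument is correct and follows essentially the same route as the paper: the vanishing part via the Rees-type identification $\grade(\Ann_R L,N)=\inf\{i:\Ext^i_R(L,N)\neq 0\}$ applied to each $M/\mathfrak{a}^nM$ (citing \cite[1.2.10]{BH}), and the non-vanishing part via the short exact sequence $0\to\mathfrak{a}^nM/\mathfrak{a}^{n+1}M\to M/\mathfrak{a}^{n+1}M\to M/\mathfrak{a}^nM\to 0$ together with $\Ext^{s-1}_R(\mathfrak{a}^nM/\mathfrak{a}^{n+1}M,N)=0$ to get injective transition maps. No gaps; the only difference is that you spell out the injectivity-implies-nonzero-colimit step slightly more explicitly than the paper does.
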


\begin{proof}
	Let $I = \mathfrak{a}+\Ann_R M$. Then
	\[
	\grade(I,N) = \inf \{i \  |\Ext^i_R(M/\mathfrak{a}^nM,N) \not= 0\}
	\]
	for all $n \geq 1$, because $\Supp_R M/\mathfrak{a}^nM = V(\mathfrak{a})$ (see \cite[1.2.10]{BH}).
	This proves the vanishing part of the claim.
	
	Now by the short exact sequence
	\[
	0 \to \mathfrak{a}^nM/\mathfrak{a}^{n+1} M \to M/\mathfrak{a}^{n+1}M
	\to M/\mathfrak{a}^n M \to 0,
	\]
	there is an injection $0 \to \Ext_R^s(M/\mathfrak{a}^nM,N) \to
	\Ext_R^s(M/\mathfrak{a}^{n+1}M,N)$. Note that $$\Ext_R^i(\mathfrak{a}^nM/\mathfrak{a}^{n+1} M,N) =0 \mbox{ for } i <s.$$
	That proves the non-vanishing of the direct limit $H^s_{\mathfrak{a}}(M,N)$.
\end{proof}

In the following we have a more functorial, alternative look at generalized local 
cohomology. To this end we use some definitions of derived functors 
(see, e.g., \cite{SS} and the references there). 
\begin{definition} \label{defnew-1}
	For $R$-modules $M, N$, an ideal $\mathfrak{a}$ and $i \in \mathbb{Z}$, we define $\widetilde{H}^i_{\mathfrak{a}}(M,N)$
	as the $i$-th cohomology of $\Rgam(\RHom_R(M, N))$. As will be explained shortly, $H^i_{\mathfrak{a}}(M,N)\cong \widetilde{H}^i_{\mathfrak{a}}(M,N)$ whenever $M$ is finitely generated. Moreover, there is no problem in extending the definition to two complexes
	in place of the two modules; we shall use this only in the second place for the dualizing complex, which is a bounded complex of injective modules with finitely generated cohomology modules. 
\end{definition}

For technical details about quasi-isomorphisms, as well as notations and related definitions, we refer to \cite{SS}.

\begin{notation} 
	(A) Let $R$ be a ring and consider the ideal  $\mathfrak{a} = \xx R$ generated by a 
	system of elements $\xx$ of $R$. Let $K_{\bullet}(\xx)$ denote the associated Koszul complex. For an integer $n \geq 1$, we put $\xx^n = x_1^n,\ldots,x_k^n$.
	We denote by $\check{C}_{\xx}$ the \v{C}ech complex of $R$ with respect
	to $\xx$. Note that $\check{C}_{\xx} \cong \varinjlim K_{\bullet}(\xx^n)$ with the natural
	maps of Koszul complexes. \\
	(B)	Let $M, N$ denote two $R$-modules.
	Let $F_{\bullet} \qism M$ be a free resolution of $M$, and $N \qism I^{\bullet}$
	be an injective resolution of $N$.
\end{notation}

For our use of generalized local cohomology, we have the following.

\begin{lemma} \label{thm-1}
	{\rm (A)} With the previous notations, $\Rgam(\RHom_R(M,N))$ has the following representatives: 
	\begin{itemize}
		\item[(a)] $\Gamma_{\mathfrak{a}}(\Hom_R(F_{\bullet},I^{\bullet}))$,
		\item[(b)] $\check{C}_{\xx} \otimes_R \Hom_R(F_{\bullet},N)$, and 
		\item[(c)] $\check{C}_{\xx} \otimes_R \Hom_R(M,I^{\bullet})$.
	\end{itemize}
{\rm (B)} If in addition $M$ is finitely generated and $F_{\bullet}$ is a resolution of $M$
	by finitely generated free $R$-modules, then $\Rgam(\RHom_R(M,N))$ has the additional representatives 
	\begin{itemize}
		\item[(d)] $\varinjlim \Hom_R(M/\mathfrak{a}^nM,I^{\bullet})$, and
		\item[(e)] $\Hom_R(M, \Gamma_{\mathfrak{a}}(I^{\bullet})) \cong \RHom_R(M,\Rgam(N))$.
	\end{itemize}
\end{lemma}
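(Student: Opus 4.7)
The plan is to deduce all five quasi-isomorphisms from the Cartan--Eilenberg double complex $\Hom_R(F_\bullet, I^\bullet)$ together with the \v{C}ech complex $\check{C}_\xx$, exploiting two standard facts: (i) since $R$ is Noetherian, arbitrary products of injective modules are injective, so each $\Hom_R(F_i, I^j)$ is injective and the (appropriately totalized) complex $\Hom_R(F_\bullet, I^\bullet)$ is a $K$-injective representative of $\RHom_R(M,N)$; and (ii) $\check{C}_\xx$ is a bounded complex of flat modules, hence $K$-flat, and $\check{C}_\xx \otimes_R Y$ represents $\Rgam(Y)$ for any complex $Y$, via the natural quasi-isomorphism $\Gamma_{\mathfrak{a}}(Z) \to \check{C}_\xx \otimes_R Z$ on $K$-injective $Z$.

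For (A), representative (a) is immediate from (i) and the definition of $\Rgam$ via $K$-injective resolutions. For (b) and (c), the natural maps $\Hom_R(F_\bullet, N) \leftarrow \Hom_R(F_\bullet, I^\bullet) \to \Hom_R(M, I^\bullet)$ are quasi-isomorphisms, and since $\check{C}_\xx$ is $K$-flat, tensoring with it preserves them. Combining this with (ii) applied to $\RHom_R(M,N)$, both $\check{C}_\xx \otimes_R \Hom_R(F_\bullet, N)$ and $\check{C}_\xx \otimes_R \Hom_R(M, I^\bullet)$ represent $\Rgam(\RHom_R(M,N))$.

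For (B), with $M$ finitely generated and $F_\bullet$ consisting of finitely generated frees, the elementary isomorphism
\[
\varinjlim_n \Hom_R(M/\mathfrak{a}^n M, X) \;\cong\; \Hom_R(M, \Gamma_{\mathfrak{a}}(X))
\]
holds for any module $X$, because $\Hom_R(M/\mathfrak{a}^n M, X) = \Hom_R(M, (0:_X \mathfrak{a}^n))$ and $\Hom_R(M,-)$ commutes with the directed colimit defining $\Gamma_{\mathfrak{a}}(X)$ when $M$ is finitely generated. Applied termwise to $I^\bullet$, this yields the displayed isomorphism in (e). Since $R$ is Noetherian, $\Gamma_{\mathfrak{a}}(I^\bullet)$ is a $K$-injective complex representing $\Rgam(N)$, so $\Hom_R(M, \Gamma_{\mathfrak{a}}(I^\bullet))$ represents $\RHom_R(M, \Rgam(N))$.

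To connect (e) with $\Rgam(\RHom_R(M,N))$, I will bridge via (a): since each $F_i$ is finitely generated, $\Hom_R(F_i,-)$ commutes with the directed colimit defining $\Gamma_{\mathfrak{a}}$, so $\Gamma_{\mathfrak{a}}(\Hom_R(F_\bullet, I^\bullet)) \cong \Hom_R(F_\bullet, \Gamma_{\mathfrak{a}}(I^\bullet))$; and the latter is quasi-isomorphic to $\Hom_R(M, \Gamma_{\mathfrak{a}}(I^\bullet))$ because $F_\bullet \to M$ is a projective resolution while $\Gamma_{\mathfrak{a}}(I^\bullet)$ is $K$-injective. The principal subtlety lies precisely in this last chain of identifications, which amounts to the mixed formula $\Rgam \RHom_R(M,N) \simeq \RHom_R(M, \Rgam(N))$ in the finitely generated case; everything else reduces to routine manipulations of $K$-flat and $K$-injective resolutions in the spirit of the references cited in the paper.
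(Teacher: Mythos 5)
Your argument is correct and follows essentially the same route as the paper's own proof: the key quasi-isomorphism $\Gamma_{\mathfrak a}(Z)\qism \check{C}_{\xx}\otimes_R Z$ applied to the Hom complex $\Hom_R(F_{\bullet},I^{\bullet})$ for part (A), and adjointness together with the colimit description of $\Gamma_{\mathfrak a}$ (using that $M$ and the $F_i$ are finitely generated) for part (B). One cosmetic slip: products of injectives are injective over any ring (the Noetherian hypothesis is what makes direct sums of injectives injective), and for the $K$-injectivity of the possibly unbounded total Hom complex one should rather invoke that $\Hom_R(X,I^{\bullet})$ is $K$-injective whenever $I^{\bullet}$ is; neither point affects the validity of your argument.
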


\begin{proof}
	Because $\check{C}_{\xx}$ is a bounded complex of flat $R$-modules, and by virtue of 
	the quasi-isomorphism (see \cite[Proposition 7.4.1]{SS} or \cite{jL}) 
	\[
	\Gamma_{\mathfrak{a}}(\Hom_R(F_{\bullet},I^{\bullet})) \qism
	\check{C}_{\xx} \otimes_R \Hom_R(F_{\bullet},I^{\bullet}),
	\]
	the statement in (A) follows.
	
	For the proof of (B),  recall that 
	\[
	\Gamma_{\mathfrak{a}}(\Hom_R(F_{\bullet},I^{\bullet})) \cong
	\Hom_R(F_{\bullet},\Gamma_{\mathfrak{a}}(I^{\bullet})) \qism 
	\textstyle{\varinjlim\limits_n} \Hom_R(M/\mathfrak{a}^nM,I^{\bullet})
	\]
	as follows easily by the definitions and adjointness. 
\end{proof}

In the case where $M$ is finitely generated, the isomorphism in \ref{thm-1}(e)
implies that the definitions of $H^i_{\mathfrak{a}}(M,N)$ and $\widetilde{H}^i_{\mathfrak{a}}(M,N)$ given respectively in 
\ref{def-1} and \ref{defnew-1} are equivalent, i.e., these cohomology modules are isomorphic. It is worth observing that this is {\it not} the case in general. 

\begin{example} \label{cor-1}
	Let $(R,\mathfrak{m})$ denote a Gorenstein complete local ring with $\dim R = d > 0$. Let $E =  E(R/\mathfrak{m})$ stand for the injective hull of the residue field. Then $\RHom_R(E,E) \cong R$ 
	and therefore  $\widetilde{H}^d_{\mathfrak{m}}(E,E) \cong E\neq 0$, and $\widetilde{H}^i_{\mathfrak{m}}(E,E) = 0$ for all $i \not= d$. 
	
	On the other hand, $$H^i_{\mathfrak{m}}(E,E) = \varinjlim_n \Ext_R^i(E/\mathfrak{m}^nE,E) = 0$$ for all $i$. This follows 
	for $i >0$ since $E$ is injective,  and for $i=0$ we have 
	$H^0_{\mathfrak{m}}(E,E) = \varinjlim \Hom_R(E/\mathfrak{m}^nE,E) \cong \Gamma_{\mathfrak{m}}(R) = 0$. 
\end{example}

Here we use mainly finitely generated $R$-modules $M$, the key advantage of the definition \ref{defnew-1} being that it is more functorial. 

Similar results for $H^i_{\mathfrak{a}}(M,N)$ to that above, sometimes under more restrictive assumptions,
have been shown by Herzog (see \cite[1.1.6]{jH}), and Divaani-Aazar, Sazeedeh, and Tousi (see \cite{DST}). 

By virtue of  the isomorphisms $H^i_{\mathfrak{a}}(M,N) \cong \widetilde{H}^i_{\mathfrak{a}}(M,N), 
	i \in \mathbb{N}$, for a finitely generated $R$-module $M$, the following corollary follows easily by \ref{thm-1}.

\begin{corollary} \label{cor-2}
	We use the previous notation.
	\begin{itemize}
		\item[(a)] Let $M$ be a finitely generated $R$-module and $0 \to N' \to N \to N'' \to 0$ be
		a short exact sequence of $R$-modules. Then there is a long exact sequence
		\[
		\ldots \to H^i_{\mathfrak{a}}(M,N') \to H^i_{\mathfrak{a}}(M,N) \to
		H^i_{\mathfrak{a}}(M,N'') \to H^{i+1}_{\mathfrak{a}}(M,N') \to \ldots
		\]
		\item[(b)] Let $0 \to M' \to M \to M'' \to 0$ be a short exact sequence
		of finitely generated $R$-modules and $N$ be an $R$-module. Then there is
		a long exact cohomology sequence
		\[
		\ldots \to  H^i_{\mathfrak{a}}(M'',N) \to H^i_{\mathfrak{a}}(M,N) \to
		H^i_{\mathfrak{a}}(M',N) \to H^{i+1}_{\mathfrak{a}}(M'',N) \to \ldots
		\]
	\end{itemize}
\end{corollary}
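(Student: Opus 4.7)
The plan is to reduce both claims to the standard fact that a short exact sequence of complexes yields a long exact sequence in cohomology. The key enabling observation is the identification $H^i_{\mathfrak{a}}(M,N) \cong \widetilde{H}^i_{\mathfrak{a}}(M,N)$ noted immediately after Lemma \ref{thm-1} whenever the first argument is finitely generated. This identification lets me realize the desired groups as cohomology of concrete complexes that are exact functors in one slot at a time, so all the work is really contained in Lemma \ref{thm-1} and I only need to feed it a short exact sequence of complexes.

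For part (a), I would pick a resolution $F_{\bullet} \qism M$ by finitely generated free $R$-modules and use representative (b) of Lemma \ref{thm-1}, namely $\check{C}_{\xx} \otimes_R \Hom_R(F_{\bullet},-)$. Since each $F_n$ is projective, $\Hom_R(F_{\bullet},-)$ is exact on modules, and the short exact sequence $0 \to N' \to N \to N'' \to 0$ produces a short exact sequence of complexes
$$0 \to \Hom_R(F_{\bullet},N') \to \Hom_R(F_{\bullet},N) \to \Hom_R(F_{\bullet},N'') \to 0.$$
Tensoring with the bounded complex of flat $R$-modules $\check{C}_{\xx}$ preserves exactness, and the cohomology of each of the three resulting complexes is $H^i_{\mathfrak{a}}(M,-)$ on the relevant module. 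The standard long exact sequence of cohomology then delivers the asserted sequence.

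For part (b), I would dually pick an injective resolution $N \qism I^{\bullet}$ and work with representative (c) of Lemma \ref{thm-1}, namely $\check{C}_{\xx} \otimes_R \Hom_R(-,I^{\bullet})$. Since each $I^j$ is injective, $\Hom_R(-,I^{\bullet})$ is exact on modules, so the short exact sequence $0 \to M' \to M \to M'' \to 0$ produces a short exact sequence of complexes
$$0 \to \Hom_R(M'',I^{\bullet}) \to \Hom_R(M,I^{\bullet}) \to \Hom_R(M',I^{\bullet}) \to 0.$$
Tensoring with the flat complex $\check{C}_{\xx}$ again preserves exactness; and since $R$ is Noetherian, $M'$ and $M''$ are automatically finitely generated, so Lemma \ref{thm-1} identifies the cohomology of each of these three complexes with $H^i_{\mathfrak{a}}(-,N)$ on the appropriate module. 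The associated long exact sequence is exactly the claimed one.

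There is no substantive obstacle here: the mathematical content is entirely absorbed into Lemma \ref{thm-1} together with the identification $H^i_{\mathfrak{a}}\cong\widetilde{H}^i_{\mathfrak{a}}$ in the finitely generated case. The only minor subtlety is making sure the connecting homomorphisms are genuinely induced by the given short exact sequence; uniformly working with a single representative of $\Rgam(\RHom_R(M,N))$ (representative (b) throughout (a), and representative (c) throughout (b)) makes this transparent, since both constructions are manifestly functorial in the slot being varied.
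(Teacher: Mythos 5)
Your proof is correct and follows essentially the same route as the paper, which simply invokes the identification $H^i_{\mathfrak{a}}(M,N)\cong\widetilde{H}^i_{\mathfrak{a}}(M,N)$ for finitely generated $M$ and the representatives of $\Rgam(\RHom_R(M,N))$ from Lemma \ref{thm-1}; you have merely made explicit which representatives (namely (b) for the second variable and (c) for the first) yield the two short exact sequences of complexes.
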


Another result says something about the degeneration of the generalized local
cohomology modules. For the proof of the next corollary, we use \ref{thm-1} and, e.g., \cite[3.2.1]{jL}.

\begin{corollary} \label{cor-3}
	Let $M$ be a finitely generated $R$-module.
	Suppose that $\Supp_RM \cap \Supp_R N \subseteq V(\mathfrak{a})$. Then
	there are isomorphisms $$H^i_{\mathfrak{a}}(M,N) \cong \Ext_R^i(M,N)$$
	for all $i \geq 0$.
\end{corollary}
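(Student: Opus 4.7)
The plan is to exploit the functorial description from Lemma \ref{thm-1}, specifically the representation $\Rgam(\RHom_R(M,N))$ with $H^i_{\mathfrak{a}}(M,N)\cong \widetilde{H}^i_{\mathfrak{a}}(M,N)$ available for finitely generated $M$, and to show that under the given support hypothesis the functor $\Rgam$ acts trivially on $\RHom_R(M,N)$.

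First I would invoke Lemma \ref{thm-1} (B)(e), together with the remark after Example \ref{cor-1}, to identify $H^i_{\mathfrak{a}}(M,N)$ with the $i$-th cohomology of $\Rgam(\RHom_R(M,N))$. Next I would check the support of the Ext modules: since $M$ is finitely generated, the standard local-global principle for $\Ext$ gives $\Ext^i_R(M,N)_{\mathfrak{p}}\cong \Ext^i_{R_{\mathfrak{p}}}(M_{\mathfrak{p}},N_{\mathfrak{p}})$, so that
\[
\Supp_R \Ext^i_R(M,N) \;\subseteq\; \Supp_R M \cap \Supp_R N \;\subseteq\; V(\mathfrak{a}) \quad\text{for all } i\geq 0.
\]

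The key step is then the standard fact (cf.\,\cite[3.2.1]{jL}) that for a complex $X$ whose cohomology modules are all supported in $V(\mathfrak{a})$, the canonical morphism $\Rgam(X) \to X$ is a quasi-isomorphism; equivalently, on a module $M'$ with $\Supp_R M' \subseteq V(\mathfrak{a})$ one has $\Gamma_{\mathfrak{a}}(M')=M'$ and higher $H^i_{\mathfrak{a}}(M')=0$. Applying this with $X=\RHom_R(M,N)$, whose cohomologies are the $\Ext^i_R(M,N)$ just bounded above, we obtain a quasi-isomorphism
\[
\Rgam(\RHom_R(M,N)) \;\qism\; \RHom_R(M,N).
\]
Taking $i$-th cohomology on both sides yields the desired isomorphism $H^i_{\mathfrak{a}}(M,N)\cong \Ext^i_R(M,N)$.

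There is no real obstacle here; the only point that requires a moment of care is confirming that the cohomology-supported-in-$V(\mathfrak{a})$ criterion applies to the (a priori unbounded) complex $\RHom_R(M,N)$, but this is exactly the content of \cite[3.2.1]{jL} cited by the authors, so the argument is essentially a direct translation of the support hypothesis through the functorial model for generalized local cohomology provided by Lemma \ref{thm-1}.
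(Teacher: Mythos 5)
Your argument is correct and follows the same route the paper indicates: identify $H^i_{\mathfrak{a}}(M,N)$ with the cohomology of $\Rgam(\RHom_R(M,N))$ via Lemma \ref{thm-1}, observe that the cohomology modules $\Ext^i_R(M,N)$ are supported in $V(\mathfrak{a})$, and invoke \cite[3.2.1]{jL} to conclude that $\Rgam(\RHom_R(M,N)) \qism \RHom_R(M,N)$. You have simply spelled out the details the paper leaves implicit, so no changes are needed.
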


\begin{remark}\label{fails}\rm It has to be pointed out that the independence property (concerning base change) fails for generalized local cohomology. To produce an example, let $x$ be an indeterminate over a field $\Bbbk$. Set $R= \Bbbk [[x]]$,  $\mathfrak{m}=(x)$ and $R'= \Bbbk [[x]]/ (x^2)$. Clearly, $R'/\mathfrak{m}R' \cong R/\mathfrak{m} \cong \Bbbk$ as $R$-modules, and $\Bbbk$ is also an $R'$-module.
Since $R'$ is not regular, $\pd_{R'}(\Bbbk)=\infty$. Moreover, since
$$H^i_{\mathfrak{m}R'}( \Bbbk, \Bbbk) \, \cong \, {\rm Ext}^i_{R'}( \Bbbk, \Bbbk),$$ we conclude that  $H^i_{\mathfrak{m}R'}( \Bbbk, \Bbbk) \neq 0$ for infinitely many values of $i$. But   $H^i_{\mathfrak{m}}( \Bbbk, \Bbbk)\cong {\rm Ext}^i_{R}( \Bbbk, \Bbbk)=0 $ for all $i\geq 1$.
\end{remark}

\section{Computations of generalized local cohomology}

According to \ref{thm-1}, there are plenty of complexes allowing for the determination of $H^i_{\mathfrak{a}}(M,N)$ whenever $M$ is a finitely generated $R$-module, as they provide useful spectral sequences for this purpose. Under additional (finiteness) assumptions, such sequences are convergent and finite.

\begin{proposition} \label{spec-1} 
	Let $\mathfrak{a}$ denote an ideal of a ring $R$. Let $M, N$ be two $R$-modules, with $M$ finitely generated. Then there are the following spectral sequences:
    \begin{itemize}
    	\item[(a)] $E_2^{i,j} = H^i_{\mathfrak{a}}(\Ext_R^j(M,N))
    	\Longrightarrow  E_{\infty}^{i+j} = H^{i+j}_{\mathfrak{a}}(M,N)$. 
    	\item[(b)] $	E_2^{i,j} = \Ext_R^i(M,H_{\mathfrak{a}}^j(N))
    	\Longrightarrow  E_{\infty}^{i+j} =
    	H^{i+j}_{\mathfrak{a}}(M,N)$.
    	\item[(c)] $E_2^{i,j} = \Tor_j^R(H^i_{\mathfrak{a}}(M,R),N) \Longrightarrow
    	E_{\infty}^{i-j} = H^{i-j}_{\mathfrak{a}}(M,N)$, if $R$ is local and $\pd_RM < \infty$.
    \end{itemize}
\end{proposition}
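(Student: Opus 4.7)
The plan is to derive all three spectral sequences from the representatives of $\Rgam(\RHom_R(M,N))$ furnished by Lemma \ref{thm-1}, combined with the identification $H^{i}_{\mathfrak{a}}(M,N)\cong \widetilde{H}^{i}_{\mathfrak{a}}(M,N)$ that holds whenever $M$ is finitely generated. In each case the $E_2$-page will arise by computing cohomology first in one direction of the double complex implicit in the chosen representative.

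For (a), I would use representative \ref{thm-1}(a), namely $\Gamma_{\mathfrak{a}}(\Hom_R(F_{\bullet},I^{\bullet}))$. Since $M$ is finitely generated, $\Hom_R(M,I)$ is injective (hence $\Gamma_{\mathfrak{a}}$-acyclic) whenever $I$ is injective, so the hypothesis of the Grothendieck spectral sequence applied to the composition $\Gamma_{\mathfrak{a}}\circ \Hom_R(M,-)$ is fulfilled. This yields
\[
E_2^{i,j}=R^i\Gamma_{\mathfrak{a}}(\Ext^j_R(M,N)) = H^i_{\mathfrak{a}}(\Ext^j_R(M,N)) \Longrightarrow \widetilde{H}^{i+j}_{\mathfrak{a}}(M,N).
\]
For (b), I would use representative \ref{thm-1}(e), namely $\Hom_R(M,\Gamma_{\mathfrak{a}}(I^{\bullet}))$, which already identifies with $\RHom_R(M,\Rgam(N))$. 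Since $\Gamma_{\mathfrak{a}}(I)$ is injective whenever $I$ is, the Grothendieck spectral sequence for the composition $\Hom_R(M,-)\circ \Gamma_{\mathfrak{a}}$ applies and gives
\[
E_2^{i,j}=\Ext^i_R(M,H^j_{\mathfrak{a}}(N))\Longrightarrow \widetilde{H}^{i+j}_{\mathfrak{a}}(M,N).
\]

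For (c), I would start from representative \ref{thm-1}(b), namely $\check{C}_{\xx}\otimes_R \Hom_R(F_{\bullet},N)$. Under the additional hypothesis that $R$ is local and $\pd_RM<\infty$, one can choose $F_{\bullet}$ to be a \emph{bounded} resolution of $M$ by finitely generated free $R$-modules. Then the natural isomorphism $\Hom_R(F_{\bullet},N)\cong \Hom_R(F_{\bullet},R)\otimes_R N$ exhibits our representative as $\bigl(\check{C}_{\xx}\otimes_R \Hom_R(F_{\bullet},R)\bigr)\otimes_R N$. Replacing $N$ by a projective resolution $P_{\bullet}\to N$ and invoking the flatness of $\check{C}_{\xx}\otimes_R \Hom_R(F_{\bullet},R)$, one obtains a double complex whose total cohomology still computes $\widetilde{H}^{n}_{\mathfrak{a}}(M,N)$. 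Filtering so as to first take cohomology in the $\check{C}_{\xx}\otimes \Hom_R(F_{\bullet},R)$ direction (which produces $H^i_{\mathfrak{a}}(M,R)$ tensored with $P_j$, using the flatness of $P_j$) and then in the direction of $P_{\bullet}$ produces
\[
E_2^{i,j}=\Tor^R_j\bigl(H^i_{\mathfrak{a}}(M,R),N\bigr) \Longrightarrow \widetilde{H}^{i-j}_{\mathfrak{a}}(M,N),
\]
with the characteristic sign flip in the total degree coming from the homological (hence negative cohomological) placement of $P_{\bullet}$.

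The main obstacle will be the bookkeeping in part (c): verifying that $\Hom_R(F_{\bullet},N)\cong \Hom_R(F_{\bullet},R)\otimes_R N$ is compatible with the \v{C}ech factor, and that the mixing of cohomological indexing (from $\check{C}_{\xx}$ and $\Hom_R(F_{\bullet},R)$) with homological indexing (from $P_{\bullet}$) produces cleanly the abutment in degree $i-j$. Parts (a) and (b), by contrast, are essentially textbook applications of the Grothendieck spectral sequence, once the acyclicity of $\Hom_R(M,I)$ and of $\Gamma_{\mathfrak{a}}(I)$ for injective $I$ is recorded.
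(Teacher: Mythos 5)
Your overall route coincides with the paper's: parts (a) and (b) are the composite-functor (Grothendieck) spectral sequences attached to the representatives in Lemma \ref{thm-1}(a) and (e), together with the identification $H^i_{\mathfrak{a}}(M,N)\cong\widetilde{H}^i_{\mathfrak{a}}(M,N)$ for finitely generated $M$; and your construction in (c) --- writing $\Hom_R(F_{\bullet},N)\cong\Hom_R(F_{\bullet},R)\otimes_RN$ for a bounded resolution by finitely generated free modules, resolving $N$ by projectives, and filtering the double complex --- is precisely an explicit unwinding of the isomorphism $\Rgam(\RHom_R(M,N))\cong\Rgam(\RHom_R(M,R))\otimes_R^{\rm{L}}N$ that the paper invokes; the flatness, boundedness and degree bookkeeping there are handled correctly, and the boundedness of $\check{C}_{\xx}\otimes_R\Hom_R(F_{\bullet},R)$ also gives the convergence in (c).

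There is, however, one step in (a) that is wrong as stated: for a finitely generated module $M$ and an injective module $I$, $\Hom_R(M,I)$ need \emph{not} be injective. For example, with $R=\Bbbk[[x]]$, $M=R/(x)$ and $I=E(R/(x))$ one has $\Hom_R(M,I)\cong\Bbbk$, which is not injective over $R$; injectivity of $\Hom_R(M,I)$ would require $M$ flat or projective, not merely finitely generated. What the Grothendieck spectral sequence for $\Gamma_{\mathfrak{a}}\circ\Hom_R(M,-)$ actually requires is only that $\Hom_R(M,I)$ be $\Gamma_{\mathfrak{a}}$-acyclic for injective $I$, and this is true for finitely generated $M$ over a Noetherian ring, but it needs its own (short) argument rather than the injectivity claim: for instance, decompose $I\cong\bigoplus_{\alpha}E(R/\mathfrak{p}_{\alpha})$, use that $\Hom_R(M,-)$ commutes with direct sums since $M$ is finitely generated, and note that $\Hom_R(M,E(R/\mathfrak{p}))$ is $\mathfrak{a}$-torsion when $\mathfrak{a}\subseteq\mathfrak{p}$, while some $x\in\mathfrak{a}\setminus\mathfrak{p}$ acts invertibly on it when $\mathfrak{a}\not\subseteq\mathfrak{p}$; in either case the higher local cohomology vanishes. (Alternatively, use $\check{C}_{\xx}\otimes_R\Hom_R(M,I)\cong\Hom_R(M,\check{C}_{\xx}\otimes_RI)$ for finitely presented $M$ and the fact that $\Gamma_{\mathfrak{a}}(I)\to\check{C}_{\xx}\otimes_RI$ is a homotopy equivalence of bounded-below complexes of injectives.) With this repair (a) is complete; (b) is fine as written, since $\Gamma_{\mathfrak{a}}(I)$ is indeed injective over a Noetherian ring.
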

\begin{proof} As it is explained by Hartshorne (see \cite{RD}) the spectral sequences grow out of the composite of derived functors. Hence (a) and (b) follow by \ref{thm-1}(a) and (e). Finally, recall that $$\Rgam(\RHom_R(M,N)) \cong \Rgam(\RHom_R(M,R))\otimes_R^{\rm{L}}N$$ for $M$ 
a finitely generated $R$-module of finite projective dimension. This provides the spectral sequence in (c).
\end{proof}

The previous spectral sequences may not be convergent. To circumvent this issue, we require suitable conditions in the next corollaries.

\begin{corollary} \label{cor-4}
	With the notations of \ref{spec-1}, assume that $M,N$ are finitely generated and put $d = \dim M \otimes_R N$.
	Suppose that $\Ext_R^i(M,N)= 0$ for all $i > p$. Then
	\[
		H^i_{\mathfrak{a}}(M,N) = 0 \quad \mbox{ for } \quad i > p+d, \quad \mbox{ and } \quad
		H^{p+d}_{\mathfrak{a}}(M,N) \cong H^d_{\mathfrak{a}}(\Ext_R^p(M,N)).
	\]
\end{corollary}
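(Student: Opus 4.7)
The plan is to apply the first spectral sequence from Proposition \ref{spec-1}, namely
\[
E_2^{i,j} = H^i_{\mathfrak{a}}(\Ext_R^j(M,N)) \Longrightarrow H^{i+j}_{\mathfrak{a}}(M,N),
\]
and show that under the hypotheses only one corner of the $E_2$ page survives. First I would observe that $\Supp_R \Ext_R^j(M,N) \subseteq \Supp_R M \cap \Supp_R N = \Supp_R (M\otimes_R N)$ for every $j\geq 0$, which is a standard fact for finitely generated modules. Consequently $\dim \Ext_R^j(M,N) \leq d$, and Grothendieck's vanishing theorem gives $H^i_{\mathfrak{a}}(\Ext_R^j(M,N)) = 0$ for $i > d$. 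Combined with the hypothesis $\Ext_R^j(M,N)=0$ for $j>p$, this shows
\[
E_2^{i,j} = 0 \quad \text{unless} \quad 0\leq i \leq d \ \text{ and } \ 0\leq j \leq p.
\]

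From this the vanishing $H^n_{\mathfrak{a}}(M,N)=0$ for $n>p+d$ is immediate, since every $E_2^{i,j}$ with $i+j=n>p+d$ must satisfy either $i>d$ or $j>p$, hence is zero; so $E_\infty^{i,j}=0$ for all $i+j>p+d$, and therefore the abutment vanishes in those degrees.

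For the identification of $H^{p+d}_{\mathfrak{a}}(M,N)$, I would inspect the corner $(i,j)=(d,p)$. The outgoing differential $d_r \colon E_r^{d,p} \to E_r^{d+r,\,p-r+1}$ lands in a region with $i>d$, where $E_2$ (and hence $E_r$) vanishes, while the incoming differential $d_r \colon E_r^{d-r,\,p+r-1} \to E_r^{d,p}$ originates in a region with $j>p$, where again $E_2$ vanishes. Thus $E_\infty^{d,p}=E_2^{d,p}=H^d_{\mathfrak{a}}(\Ext_R^p(M,N))$. On the total diagonal $i+j=p+d$ this is moreover the only nonzero term on $E_\infty$, so the filtration on the abutment collapses and yields the desired isomorphism.

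The only mildly delicate point, which I view as the main obstacle, is being careful that the spectral sequence \ref{spec-1}(a) converges in the relevant range so that the filtration argument is valid; once one restricts attention to the bounded rectangle $0\leq i\leq d,\ 0\leq j\leq p$ on the $E_2$ page (as forced by the support and vanishing observations above), convergence and the collapse at the corner $(d,p)$ are standard, and no further delicate homological input is required.
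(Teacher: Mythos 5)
Your proposal is correct and follows essentially the same route as the paper: the spectral sequence of Proposition \ref{spec-1}(a), the bound $\dim \Ext_R^j(M,N)\leq d$ forcing $E_2$ to vanish outside the rectangle $0\leq i\leq d$, $0\leq j\leq p$ (whence convergence and the vanishing for degrees $>p+d$), and the corner argument showing all differentials into and out of $E_r^{d,p}$ vanish, so $E_\infty^{d,p}=E_2^{d,p}$ is the only surviving term on the diagonal $i+j=p+d$. No gaps; your explicit support argument $\Supp_R\Ext_R^j(M,N)\subseteq\Supp_R(M\otimes_RN)$ just spells out the paper's remark that $\dim\Ext_R^j(M,N)\leq d$.
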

\begin{proof}
	Since $\Ext_R^i(M,N) = 0$ for all $i > p$, the  spectral sequence of \ref{spec-1}(a)
	\[
	E_2^{i,j} = H^i_{\mathfrak{a}}(\Ext_R^j(M,N))
	\Longrightarrow  E_{\infty}^{i+j} =
	H^{i+j}_{\mathfrak{a}}(M,N)
	\]
	is now convergent. Note that $\dim \Ext_R^j(M,N) \leq d$ for all $j$.
	That is, $E_2^{i,j} = 0$ for all $j > d$ and all $i > p$. This shows that
	$E_{\infty}^{i+j} = 0$ whenever $i+j > p+d$.
	
	Next we investigate the situation $i+j = p+d$. The subsequent stages of the spectral
	sequence are
	\[
	 E^{i-r,j+r-1} \to  E^{i,j} \to  E^{i+r,j-r+1}
	\quad \mbox{ for } \quad r \geq 2.
	\]
	Now $ E^{i+r,j-r+1} $ is a subquotient of
	$E_2^{i+r,j-r+1} = 0$ for $r \geq 2$, while
	$ E^{i-r,j+r-1} = 0$ for $r \geq 2$. The last vanishing follows since $j+r-1 \leq d$ implies that
	$i \geq p+r-1$. Therefore $E_2^{p,d} =
	E_{\infty}^{p,d}$ and $E_{\infty}^{i,j} = 0$
	for all $(i,j) \not = (p,d)$ with $i+j = p+d$. This completes the proof.
\end{proof}

For the next result, recall the definition of the {\it cohomological dimension}
$$\cd(\mathfrak{a},N) = \sup \{i \in \mathbb{N}\, | \, H^i_{\mathfrak{a}}(N) \not= 0\}$$  of an $R$-module $N$ with respect to $\mathfrak{a}$. Note that $\cd(\mathfrak{a},N) \leq \dim_R N\leq \dim R$. The next result can also be found in \cite[Proposition 2.8]{HV}.

\begin{corollary} \label{cor-5}
	Let $R$ be a local ring and $\mathfrak{a}$ be an ideal of $R$. Let $M, N$ be two finitely generated $R$-modules. Suppose
	that $p:=\pd_R M  < \infty$, and let $n := \cd(\mathfrak{a},N)$.
	\begin{itemize}
		\item[(a)] $H^i_{\mathfrak{a}}(M,N) = 0$
		for all $i > p+n$.
		\item[(b)] $H^{p+n}_{\mathfrak{a}}(M,N)
		\cong \Ext_R^p(M,H^n_{\mathfrak{a}}(N)) \cong \Ext_R^p(M,R) \otimes_R H^n_{\mathfrak{a}}(N)$.
	\end{itemize}
\end{corollary}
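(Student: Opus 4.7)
The plan is to deploy the spectral sequence of Proposition \ref{spec-1}(b), namely
\[
E_2^{i,j} = \Ext_R^i(M, H_{\mathfrak{a}}^j(N)) \Longrightarrow E_\infty^{i+j} = H^{i+j}_{\mathfrak{a}}(M,N).
\]
The hypotheses give two vanishing lines: $\pd_R M = p$ forces $E_2^{i,j} = 0$ for $i > p$, while $\cd(\mathfrak{a},N) = n$ forces $E_2^{i,j} = 0$ for $j > n$. Hence the $E_2$-page is supported on the rectangle $[0,p] \times [0,n]$, so the spectral sequence is convergent and finite.

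For part (a), any lattice point $(i,j)$ with $i+j > p+n$ satisfies $i > p$ or $j > n$, so $E_2^{i,j} = 0$; since each $E_\infty^{i+j}$ is built from subquotients of such entries, all $H^{i+j}_{\mathfrak{a}}(M,N)$ with $i+j > p+n$ vanish.

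For the first isomorphism in part (b), I would mimic the subquotient argument from the proof of Corollary \ref{cor-4}. On the anti-diagonal $i+j = p+n$, the only potentially nonzero entry is $(p,n)$ (any other point has $i>p$ or $j>n$). The differentials leaving $E_r^{p,n}$ land in $E_r^{p+r,\,n-r+1}$ with $p+r > p$, hence in zero; the differentials entering come from $E_r^{p-r,\,n+r-1}$ with $n+r-1 > n$, again zero. Thus $E_\infty^{p,n} = E_2^{p,n}$ and, since no other point on the diagonal contributes, $H^{p+n}_{\mathfrak{a}}(M,N) \cong \Ext_R^p(M, H^n_{\mathfrak{a}}(N))$.

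For the second isomorphism, I would choose a finite free resolution $F_\bullet \qism M$ of length $p$ by finitely generated free $R$-modules, which exists because $R$ is local and $\pd_R M = p$. For any $R$-module $X$, using $\Hom_R(F_i,X) \cong F_i^* \otimes_R X$ and the right-exactness of $-\otimes_R X$, one obtains
\[
\Ext_R^p(M,X) \cong \Coker\bigl(F_{p-1}^* \to F_p^*\bigr) \otimes_R X \cong \Ext_R^p(M,R) \otimes_R X.
\]
Applying this with $X = H^n_{\mathfrak{a}}(N)$ yields the remaining isomorphism. The main (and essentially only) obstacle is the bookkeeping in the spectral sequence step confirming that no higher differentials survive at $(p,n)$; the final tensor identification is a formal consequence of having a finite free resolution of length exactly $p$.
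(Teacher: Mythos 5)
Your proof is correct and follows essentially the same route as the paper: the spectral sequence $E_2^{i,j}=\Ext_R^i(M,H^j_{\mathfrak a}(N))$ from Proposition \ref{spec-1}(b), vanishing outside the rectangle $[0,p]\times[0,n]$, and the degeneration argument at the corner $(p,n)$. You additionally spell out the identification $\Ext_R^p(M,X)\cong\Ext_R^p(M,R)\otimes_R X$ via right-exactness at the top of a finite free resolution, a step the paper's proof leaves implicit; that argument is valid.
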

\begin{proof}
	By \ref{spec-1}(b), there
	is the convergent spectral sequence
	\[
	E_2^{i,j} = \Ext_R^i(M,H^j_{\mathfrak{a}}(N))
	\Longrightarrow  E_{\infty}^{i+j} =
	H^{i+j}_{\mathfrak{a}}(M,N).
	\]
	First, we examine it for $i+j > p+n$. We have $E_2^{i,j} = 0$ because either $i > p$ or $j > n$. Note that $H^i_{\mathfrak{a}}(N) = 0$ for $i > \cd(\mathfrak{a},N)$.  This yields $E_{\infty}^{i+j} =0$, which proves the claim  (a).

	For the proof of (b), let $i+j = p+n$. Then the subsequent stages of the spectral sequence are
	\[
	 E^{i-r,j+r-1} \to  E^{i,j} \to  E^{i+r,j-r+1}
	\mbox{ for } r \geq 2.
	\]
	Now $ E^{i+r,j-r+1} $ is a subquotient of
	$E_2^{i+r,j-r+1} = 0$ for $r \geq 2$, while
	$ E^{i-r,j+r-1} = 0$ for $r \geq 2$. The last vanishing follows since $j+r-1 \leq n$ implies that $i \geq p+r-1$. Hence $E_2^{p,n} =
	E_{\infty}^{p,n}$ and $E_{\infty}^{i,j} = 0$
	for all $(i,j) \not = (p,n)$ with $i+j = p+n$.
	By the convergence of the spectral sequence,
	there is a partial degeneration and this proves (b).
\end{proof}

If $R$ is a local ring, there is a well-known uniform vanishing result for ordinary local cohomology, to wit, $H^i_{\mathfrak{a}}(\cdot) = 0$ for $i > \dim R$. The following remark
is important in view of generalized local cohomology.

\begin{remark} \label{rem-1}
	Let $(R,\mathfrak{m})$ be a local ring with residue field $\Bbbk$. Then
	$H^i_{\mathfrak{a}}(\cdot,N) = 0$  for all $i \gg 0$ if and only if $\id_R N <
	\infty$. This statement is clear. Now, if $M$ is a finitely generated $R$-module then we claim that $H^i_{\mathfrak{a}}(M, N) = 0$ for all $i \gg 0$, whenever $N$ is a finitely generated $R$-module, if and only if $\pd_R M < \infty$. To see this, choose $N = \Bbbk$;  hence
	$$H^i_{\mathfrak{a}}(M,\Bbbk) \cong \Ext_R^i(M,\Bbbk)$$ by \ref{cor-3} and then, from the hypothesis, $\Ext_R^i(M,\Bbbk)$ must vanish for $i \gg 0$, which implies that (and is clearly equivalent to) ${\rm pd}_RM< \infty$. Conversely, if $M$ has finite projective dimension, say $p$, then by \ref{cor-5}(a) we get $H^i_{\mathfrak{a}}(M, N)=0$ for all $i>p+{\rm cd}(\mathfrak{a}, N)$, whenever $N$ is a finitely generated $R$-module. But the cohomological dimension is bounded above by $t=\dim R$. Thus, precisely, $H^i_{\mathfrak{a}}(M, N)=0$ for all $i>p+t$.
\end{remark}

For some applications in the subsequent sections we shall need the following 
degenerations of the spectral sequences established in \ref{spec-1}(b) and (c).

\begin{corollary} \label{cor-6}
	Let $M, N$ be two $R$-modules, with $M$ finitely generated. 
	\begin{itemize}
		\item[(a)] Suppose $H^i_{\mathfrak{a}}(N) = 0$ for all $i \not= q$. Then, 
	$\Ext_R^{i-q}(M,H^q_{\mathfrak{a}}(N)) \cong H^i_{\mathfrak{a}}(M,N)$ for all $i \geq 0$, and in particular $H^i_{\mathfrak{a}}(M,N) = 0$
	for all $i < q$.
	\item[(b)] Suppose $R$ is local, $\pd_RM < \infty$ and $H^i_{\mathfrak{a}}(M,R) = 0$
	for all $i \not= q$. Then, there are isomorphisms
	$
	\Tor_j^R(H^q_{\mathfrak{a}}(M,R),N) \cong H^{q-j}_{\mathfrak{a}}(M,N)
	$
	for all $j \geq 0$.
	\end{itemize}. 
\end{corollary}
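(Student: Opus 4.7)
The plan is to derive both statements directly from the degeneration of the two spectral sequences in Proposition~\ref{spec-1}(b) and (c). Under the vanishing hypotheses imposed here, each $E_2$-page has nonzero entries only in a single row (respectively column), so the spectral sequences collapse immediately at $E_2$, and the filtration on the abutment has a single nonzero subquotient in each total degree.

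For part (a), I would invoke the spectral sequence
\[
E_2^{i,j} = \Ext_R^i(M, H^j_{\mathfrak{a}}(N)) \Longrightarrow H^{i+j}_{\mathfrak{a}}(M,N).
\]
Since $H^j_{\mathfrak{a}}(N) = 0$ for $j \neq q$, the only potentially nonzero row is $j = q$. Hence every differential $d_r$ for $r \geq 2$ has either source or target on a zero row (as it shifts the $j$-coordinate by $\pm(r-1) \neq 0$), so $E_2^{i,q} = E_{\infty}^{i,q}$. Reindexing $i \mapsto i-q$ yields the asserted isomorphism $\Ext_R^{i-q}(M, H^q_{\mathfrak{a}}(N)) \cong H^i_{\mathfrak{a}}(M,N)$ for all $i \geq 0$. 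The vanishing $H^i_{\mathfrak{a}}(M,N) = 0$ for $i < q$ is immediate since $\Ext^{i-q}_R(M, -) = 0$ when $i - q < 0$.

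For part (b), I would run the same collapse argument using the spectral sequence
\[
E_2^{i,j} = \Tor_j^R(H^i_{\mathfrak{a}}(M,R), N) \Longrightarrow H^{i-j}_{\mathfrak{a}}(M,N),
\]
which requires $\pd_R M < \infty$ and hence the hypothesis in the statement. The assumption $H^i_{\mathfrak{a}}(M,R) = 0$ for $i \neq q$ kills every column except $i = q$, so all differentials $d_r$ (which change the $i$-coordinate) vanish, giving $E_2^{q,j} = E_{\infty}^{q,j}$ and the claimed isomorphism $\Tor_j^R(H^q_{\mathfrak{a}}(M,R), N) \cong H^{q-j}_{\mathfrak{a}}(M,N)$.

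Honestly, there is no real obstacle here: both parts are essentially collapse-at-$E_2$ arguments and the only thing to check carefully is the direction of the differentials (so that a single nonzero row or column truly forces $E_2 = E_{\infty}$) and, as already discussed in the proofs of Corollaries~\ref{cor-4} and \ref{cor-5}, the convergence of the spectral sequences — which in the first case is automatic since only one row is nonzero and in the second case is guaranteed by $\pd_R M < \infty$.
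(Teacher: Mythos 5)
Your proposal is correct and is exactly the argument the paper intends: the statement is presented as the degeneration of the spectral sequences of Proposition~\ref{spec-1}(b) and (c), and your collapse-at-$E_2$ reasoning (single nonzero row for (a), single nonzero column for (b), with all differentials shifting off that row/column) is the same route. The only care needed, which you note, is convergence, and this is indeed automatic here since the relevant $E_2$-pages are concentrated in one row, respectively one column (with $\pd_RM<\infty$ ensuring the Tor spectral sequence exists and is bounded).
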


\section{Cohen-Macaulayness and Generalized Duality}

In this section we investigate Cohen-Macaulayness of modules and establish a generalized local duality theorem.

\begin{notation} \label{not-2} 
	(A) Let $(R,\mathfrak{m})$ be a local ring possessing a dualizing complex $D^{\bullet}$ (equivalently, $R$ is a quotient of a Gorenstein local ring). We tacitly regard $D^{\bullet}$  as a bounded complex of injective $R$-modules with finitely generated cohomology. Moreover, we suppose that $D^{\bullet}$ is {\it normalized} in the sense of \cite[11.4.6]{SS}. \\
	(B) (\textsl{Local Duality Theorem}) Let $R$ and $D^{\bullet}$ be as above, and let $X$ be an $R$-complex with finitely generated cohomology modules.  Then 
	\[
		\RGam (X) \cong \Hom_R(\RHom_R(X,D^{\bullet}),\,E)[-t],
	\]
	where $t = \dim R$ and $E =  E(R/\mathfrak{m})$ denotes the injective hull of the residue field. That is, 
	$H^i_{\mathfrak{m}}(X) \cong \Hom_R(\Ext^{t-i}_R(X, D^{\bullet}), E)$ for all $i \geq 0$ (see \cite{RD} resp. \cite[Theorem 12.2.1]{SS}).\\
	(C) If $(R,\mathfrak{m})$ is a local ring which is a factor ring
	of an $s$-dimensional Gorenstein local ring $(S,\mathfrak{n})$, and if $M$ is a finitely generated $R$-module, then 
	\[
	H^i_{\mathfrak{m}}(M) \cong \Hom_R(\Ext_S^{s-i}(M,S),\,E)
	\]
	for all $i \geq 0$. The
	$R$-module $$K^i(M) = \Ext_S^{s-i}(M,S), \, \, \, \, i = 0,\ldots,\dim_R M,$$ is called the {\it $i$-th module of deficiency} of $M$. We call $K(M) = K^d(M)$, with $d = \dim_R M,$  the {\it canonical module} of $M$ (see \cite[Section 1]{pS} for the basic properties of such modules).  By Matlis duality, it follows that
	\[\Hom_R(H^i_{\mathfrak{m}}(M), E)
	\cong \Ext_S^{s-i}(M,S) \otimes_R \hat{R}\]
	\noindent where $\hat{R}$ denotes the completion of $R$ (in the $\mathfrak{m}$-adic topology). 
	Moreover note that $K^i(M) \cong H^{t-i}(\Hom_R(M,D^{\bullet}))$.
\end{notation}

A particularly interesting case of \ref{cor-6} is when $\mathfrak{a}=\mathfrak{m}$, the maximal ideal of the local ring $R$, and $N$ is a Cohen-Macaulay $R$-module. 

\begin{corollary} \label{cor-7}
	Let $(R,\mathfrak{m})$ be a local ring and $M, N$ be two finitely
	generated $R$-modules, with $N$ Cohen-Macaulay. Let $t={\rm dim}_RN$. Then, there are isomorphisms
	\begin{itemize}
		\item[(a)]
		$\Ext_R^{i-t}(M,H^t_{\mathfrak{m}}(N)) \cong H^i_{\mathfrak{m}}(M,N)$, 
		\item[(b)]
		$\Tor_{i-t}^R(M,\, \Hom_R(H^t_{\mathfrak{m}}(N), E)) \cong \Hom_R(H^i_{\mathfrak{m}}(M,N), E)$, 
		\item[(c)]
		$H^i_{\mathfrak{m}}(M,K(\hat{N})) \cong \Ext_R^{i-t}(M,\Hom_R(N, E))
		\cong \Hom_R(\Tor_{i-t}^R(M,N), E)$, and 
		\item[(d)]
		$\Hom_R(H^i_{\mathfrak{m}}(M,K(\hat{N})), E) \cong \Tor_{i-t}^R(M, N) \otimes_R \hat{R}$,
	\end{itemize}
	for all $i \geq 0$.
\end{corollary}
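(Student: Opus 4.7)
Part~(a) is immediate from the degeneration tool developed earlier: since $N$ is Cohen-Macaulay of dimension $t$, we have $H^i_{\mathfrak{m}}(N)=0$ for $i\neq t$, so Corollary~\ref{cor-6}(a), applied with $\mathfrak{a}=\mathfrak{m}$ and $q=t$, delivers the claim at once.

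For~(b), my plan is to apply the Matlis duality functor $\Hom_R(-,E)$ to the isomorphism in~(a) and combine it with the standard identity
\[
\Hom_R(\Ext_R^j(M,X),E)\cong \Tor_j^R(M,\Hom_R(X,E)),
\]
valid for any finitely generated $R$-module $M$ and any $R$-module $X$. This identity follows by choosing a resolution $F_\bullet$ of $M$ by finitely generated free modules, noting that $\Hom_R(\Hom_R(F_i,X),E)\cong F_i\otimes_R\Hom_R(X,E)$ since each $F_i$ has finite rank, and appealing to the exactness of $\Hom_R(-,E)$.

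For~(c), the idea is to apply~(a) with $N$ replaced by $K(\hat N)$, which is Cohen-Macaulay of dimension $t$ (over $\hat R$, and hence also when viewed as an $R$-module). That yields
\[
H^i_{\mathfrak{m}}(M,K(\hat N))\cong \Ext_R^{i-t}\bigl(M,\,H^t_{\mathfrak{m}}(K(\hat N))\bigr).
\]
It then remains to identify the module $H^t_{\mathfrak{m}}(K(\hat N))$. Local cohomology is insensitive to passing to the completion, so this agrees with $H^t_{\mathfrak{m}\hat R}(K(\hat N))$; the local duality theorem over $\hat R$ (see~\ref{not-2}(B)), together with the canonical-module reflexivity $K(K(\hat N))\cong \hat N$ coming from the Cohen-Macaulayness of $\hat N$, and the natural isomorphism $\Hom_{\hat R}(\hat N,E)\cong \Hom_R(N,E)$, produces $H^t_{\mathfrak{m}}(K(\hat N))\cong \Hom_R(N,E)$. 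The second isomorphism in~(c) is the Hom--Tor adjunction into the injective module $E$, namely $\Ext_R^{i-t}(M,\Hom_R(N,E))\cong \Hom_R(\Tor_{i-t}^R(M,N),E)$.

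Finally,~(d) is obtained by applying $\Hom_R(-,E)$ to the composite isomorphism in~(c) and invoking Matlis reflexivity, i.e., the fact that the Matlis double dual of a finitely generated $R$-module $X$ is $X\otimes_R\hat R$. The main obstacle throughout is the identification $H^t_{\mathfrak{m}}(K(\hat N))\cong \Hom_R(N,E)$ in part~(c), as it weaves together canonical-module reflexivity for the Cohen-Macaulay module $\hat N$ with a careful change of rings between $R$ and $\hat R$; once this is secured, the remaining assertions become formal consequences of adjunction and Matlis duality.
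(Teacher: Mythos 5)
Your proposal is correct and follows essentially the same route as the paper: part (a) via the degeneration in Corollary \ref{cor-6}(a), part (b) by Matlis duality and the standard $\Hom$--$\Tor$ identity, part (c) by reapplying (a) to $K(\hat N)$ together with $K(K(\hat N))\cong\hat N$ and local duality to identify $H^t_{\mathfrak{m}}(K(\hat N))\cong\Hom_R(N,E)$, and part (d) by Matlis duality again. The only difference is that you spell out the change-of-rings details between $R$ and $\hat R$ that the paper leaves implicit.
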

\begin{proof}
	The results in (a) and (b) follow by  \ref{cor-6} and Matlis duality. 
	By Cohen's structure theorem, $\hat{R}$ is a factor ring of a Gorenstein ring.
	Therefore $K(\hat{N})$ exists and is a $t$-dimensional Cohen-Macaulay
	$\hat{R}$-module as well, with $\hat{N} \cong K(K(\hat{N}))$ (see, e.g., \cite[Theorem 1.14]{pS}) 
	and $H^t_{\mathfrak{m}}(K(\hat{N})) \cong  \Hom_R(N,E)$.
	By the statement in (a) there are isomorphisms
	\[
	\Ext_R^{i-t}(M,H^t_{\mathfrak{m}}(K(\hat{N}))) \cong H^i_{\mathfrak{m}}(M,K(\hat{N}))
	\]
	for all $i \geq 0$. This yields 
	\[
	H^i_{\mathfrak{m}}(M,K(\hat{N})) \cong \Ext_R^{i-t}(M,\Hom_R(N, E))
	\]
	 which proves the first
	part of (c). The rest is a consequence of adjointness and Matlis duality.
\end{proof}

A special case is when $R$ is Cohen-Macaulay and $N=R$. 

\begin{corollary} \label{cor-9}
	Let $(R,\mathfrak{m})$ be a $t$-dimensional Cohen-Macaulay local ring, and let $M$ be a finitely generated $R$-module. Then 
	\begin{itemize}
		\item[(a)] 	$H^t_{\mathfrak{m}}(M,K(\hat{R})) \cong \Hom_R(M, E)$, and $H^i_{\mathfrak{m}}(M,K(\hat{R})) = 0$ for all $i \not= t$.
		\item[(b)] If $\pd_RM < \infty$, then $H^t_{\mathfrak{m}}(M,R) \cong \Hom_R(M \otimes_R K(\hat{R}), E)$ 
		 and  $H^i_{\mathfrak{m}}(M,R) = 0$
		for all $i \not= t$.
	\end{itemize}
\end{corollary}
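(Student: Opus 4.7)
My plan is to derive both assertions by specializing Corollary \ref{cor-7} to the choice $N = R$, using that $R$ is itself a $t$-dimensional Cohen-Macaulay $R$-module.

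For part (a), the natural route is via \ref{cor-7}(c) with $N = R$, which yields
\[
H^i_{\mathfrak{m}}(M, K(\hat{R})) \;\cong\; \Ext^{i-t}_R\bigl(M,\Hom_R(R, E)\bigr) \;=\; \Ext^{i-t}_R(M, E)
\]
for every $i \geq 0$. Since $E$ is injective, the right-hand side equals $\Hom_R(M, E)$ when $i = t$ and vanishes for all $i \neq t$ (positive Ext vanishes by injectivity of $E$, negative Ext vanishes by convention). This gives part (a) at once.

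For part (b), I would apply \ref{cor-7}(b) with $N = R$, obtaining
\[
\Tor^R_{i-t}\bigl(M, \Hom_R(H^t_{\mathfrak{m}}(R), E)\bigr) \;\cong\; \Hom_R\bigl(H^i_{\mathfrak{m}}(M, R),\, E\bigr).
\]
By local duality \ref{not-2}(B) applied to the complex $R$ itself, $H^t_{\mathfrak{m}}(R) \cong \Hom_R(K(\hat{R}), E)$, and Matlis biduality (legitimate since $K(\hat{R})$ is finitely generated over $\hat{R}$) identifies $\Hom_R(H^t_{\mathfrak{m}}(R), E)$ with $K(\hat{R})$. Substituting gives
\[
\Tor^R_{i-t}(M, K(\hat{R})) \;\cong\; \Hom_R\bigl(H^i_{\mathfrak{m}}(M, R),\, E\bigr).
\]
Setting $i = t$ and Matlis dualizing once more — legitimate because $H^t_{\mathfrak{m}}(M, R) \cong \Hom_R(M, H^t_{\mathfrak{m}}(R))$ is Artinian as a submodule of $(H^t_{\mathfrak{m}}(R))^n$ for a surjection $R^n \twoheadrightarrow M$ — yields the claimed formula $H^t_{\mathfrak{m}}(M, R) \cong \Hom_R(M \otimes_R K(\hat{R}), E)$. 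For $i < t$, the negative $\Tor$ is zero, forcing the Matlis dual of $H^i_{\mathfrak{m}}(M, R)$ to vanish, and hence $H^i_{\mathfrak{m}}(M, R) = 0$ by faithfulness of Matlis duality.

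The chief obstacle is the case $i > t$, which reduces to proving $\Tor^R_j(M, K(\hat{R})) = 0$ for all $j \geq 1$, under the standing hypothesis $\pd_R M < \infty$. I would settle this via Peskine-Szpiro's acyclicity lemma: after passing to $\hat{R}$, the Auslander-Buchsbaum formula gives $\pd_{\hat{R}}\hat{M} \leq t$, so a minimal free resolution $\hat{F}_{\bullet}$ of $\hat{M}$ has length at most $t$; each term of the complex $K(\hat{R}) \otimes_{\hat{R}} \hat{F}_{\bullet}$ is a maximal Cohen-Macaulay $\hat{R}$-module of depth $t$, so the depth condition of the acyclicity lemma is satisfied, and an induction on $\dim R$ (based on the facts that $K(\hat{R})_{\mathfrak{p}}$ is the canonical module of $\hat{R}_{\mathfrak{p}}$ and that finite projective dimension localizes) shows the higher homologies to be supported only at the maximal ideal. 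They are therefore of finite length with depth zero whenever nonzero, so acyclicity forces them to vanish — completing the proof of (b).
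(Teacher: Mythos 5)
Your proof is correct, and its skeleton matches the paper's: both parts are obtained by specializing Corollary \ref{cor-7}, and part (b) ultimately reduces to the vanishing $\Tor_j^R(M,K(\hat{R}))=0$ for $j\geq 1$. There are two genuine differences in execution. First, for (b) the paper feeds $K(\hat{R})$ into the $N$-slot of \ref{cor-7}(c), using $\hat{R}\cong K(K(\hat{R}))$ to land directly on $H^i_{\mathfrak{m}}(M,\hat{R})\cong \Hom_R(\Tor_{i-t}^R(M,K(\hat{R})),E)$, which already has $H^i$ on the "dualized-to" side; you instead take $N=R$ in \ref{cor-7}(b), identify $\Hom_R(H^t_{\mathfrak{m}}(R),E)$ with $K(\hat{R})$, and then must undo a Matlis dual at the end -- your justification via the Artinianness of $H^t_{\mathfrak{m}}(M,R)\cong\Hom_R(M,H^t_{\mathfrak{m}}(R))$ is the right one and makes this step legitimate, at the cost of an extra reflexivity check the paper's route avoids. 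Second, the paper simply cites Sharp \cite{rS} (or Yoshida \cite{Yoshida}) for the Tor-vanishing, whereas you prove it from scratch by the Peskine--Szpiro acyclicity lemma applied to $K(\hat{R})\otimes_{\hat{R}}\hat{F}_{\bullet}$, with the induction on dimension supplying the finite-length condition on the higher homologies; this is essentially the standard proof of the cited result, so your argument is self-contained where the paper's is not. Both approaches are sound; yours trades one external reference for a longer but complete argument.
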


\begin{proof}
	The statements in (a) follow by \ref{cor-7}. In order to prove (b), first note that $H^i_{\mathfrak{m}}(M,R) \cong
	H^i_{\mathfrak{m}}(M, \hat{R})$ for all $i$. Now $K(\hat{R})$ is a
	$t$-dimensional Cohen-Macaulay module and $\hat{R} \cong K(K(\hat{R}))$.
	Because of \ref{cor-7}, we get
	\[
	H^i_{\mathfrak{m}}(M,\hat{R}) \cong \Hom_R(\Tor_{i-t}^R(M,K(\hat{R})),\,E)
	\]
	for all $i \geq 0$. In order to finish the proof, note that
	$\Tor_i^R(M,K(\hat{R})) = 0$ for all $i \not= 0$. This holds by virtue of \cite[Proposition 2.6]{rS}  (see also \cite[Lemma 2.2]{Yoshida}).
\end{proof}

For our next result we recall the notion of {\it Cohen-Macaulay defect} of a finitely generated  $R$-module $M$. This is the number $\delta_{\rm CM}(M)= {\rm dim}_R M -\depth_R M.$ Moreover we use the definition of generalized 
local cohomology of complexes as suggested in \ref{defnew-1}. 

\begin{theorem}\label{theo01}
	Let $(R,\mathfrak{m})$ denote a $t$-dimensional local ring possessing
	a dualizing complex $D^{\bullet}$. Let $M\not= 0$ be a finitely generated $R$-module, 
	and let $d=\dim_RM$. Then, 
	\begin{itemize}
		\item[(a)] $H^t_{\mathfrak{m}}(M,D^{\bullet}) \cong \Hom_R(M,E) \not= 0$, and 
		$H^i_{\mathfrak{m}}(M,D^{\bullet}) = 0$ for all $i \not= t$.
		\item[(b)] If $\pd_RM= p < \infty$, then  $H^{p+d}_{\mathfrak{m}}(M,  D^{\bullet})  \neq  0$ if and only if\, $\delta_{\rm CM}(M) = \delta_{\rm CM}(R)$.
	\end{itemize}
\end{theorem}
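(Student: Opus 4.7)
The plan is to reduce part (a) to a direct application of the local duality theorem recalled in Notation \ref{not-2}(B), and then deduce part (b) as a purely numerical consequence via the Auslander--Buchsbaum formula.

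For part (a), I would first observe that $\RHom_R(M,D^{\bullet})$ is an $R$-complex with finitely generated cohomology modules (essentially the deficiency modules $K^i(M)$), since $D^{\bullet}$ is a bounded complex of injectives with finitely generated cohomology and $M$ is finitely generated. Hence it qualifies as admissible input for the local duality theorem. Applying that theorem with $X = \RHom_R(M,D^{\bullet})$ yields
\[
\RGam(\RHom_R(M,D^{\bullet})) \;\cong\; \Hom_R\bigl(\RHom_R(\RHom_R(M,D^{\bullet}),D^{\bullet}),\,E\bigr)[-t].
\]
Invoking the biduality property of the normalized dualizing complex (valid for any complex with finitely generated cohomology), the inner double-dual collapses to $M$, so
\[
\RGam(\RHom_R(M,D^{\bullet})) \;\cong\; \Hom_R(M,E)[-t].
\]
Taking $i$-th cohomology and using the complex version of Definition \ref{defnew-1} then gives both the asserted isomorphism $H^t_{\mathfrak{m}}(M,D^{\bullet}) \cong \Hom_R(M,E)$ and the vanishing for $i \neq t$. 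The non-vanishing $\Hom_R(M,E) \neq 0$ follows from Nakayama's lemma: $M/\mathfrak{m}M \neq 0$ surjects onto $R/\mathfrak{m}$, which embeds into $E$.

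For part (b), I would combine (a) with the Auslander--Buchsbaum formula $p + \depth_R M = \depth R$. A direct computation gives
\[
p + d \;=\; \depth R - \depth_R M + \dim_R M \;=\; \depth R + \delta_{\rm CM}(M),
\]
whereas $t = \dim R = \depth R + \delta_{\rm CM}(R)$. By part (a), $H^{p+d}_{\mathfrak{m}}(M,D^{\bullet}) \neq 0$ if and only if $p + d = t$, which by the identities above is equivalent to $\delta_{\rm CM}(M) = \delta_{\rm CM}(R)$.

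The only non-bookkeeping ingredient is the biduality step; this is standard machinery for dualizing complexes but is the one place where the hypothesis on $R$ (possession of a dualizing complex) is truly exploited beyond the formal statement of local duality. No obstacle of real substance is anticipated beyond invoking it correctly.
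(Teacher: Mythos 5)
Your proposal is correct, and both parts reach the same final computation, but the route in part (a) differs from the paper's. You apply the Local Duality Theorem of \ref{not-2}(B) to the complex $X=\RHom_R(M,D^{\bullet})$ (legitimate, since its cohomology modules are the finitely generated $\Ext$-modules $\Ext^i_R(M,D^{\bullet})$) and then collapse the double dual via the biduality property $\RHom_R(\RHom_R(M,D^{\bullet}),D^{\bullet})\simeq M$, using that $\Hom_R(-,E)$ is exact to pass to $\Hom_R(M,E)[-t]$. The paper instead commutes $\RGam$ past $\RHom_R(M,-)$ using Lemma \ref{thm-1}(e) (this is where the finite generation of $M$ enters) and then uses the normalization fact $\RGam(D^{\bullet})\cong E$ (up to the shift $[-t]$), so no biduality is needed. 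The two arguments are equivalent packages of standard dualizing-complex facts: yours trades the explicit use of \ref{thm-1}(e) for biduality, which is slightly heavier machinery but makes the statement of \ref{not-2}(B) do all the work; the paper's is a bit shorter and stays closer to the toolkit already set up in Section 1. Your nonvanishing argument for $\Hom_R(M,E)\neq 0$ (Nakayama plus the embedding $R/\mathfrak{m}\hookrightarrow E$) and your part (b) (Auslander--Buchsbaum bookkeeping showing $p+d=t$ iff $\delta_{\rm CM}(M)=\delta_{\rm CM}(R)$, combined with the concentration in degree $t$ from (a)) coincide with what the paper leaves implicit, and are carried out correctly.
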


\begin{proof} 
	First note that $\RGam (D^{\bullet}) \cong E$. Since $M$ is a finitely generted $R$-module, we have isomorphisms 
	\[
	\RGam (\RHom_R(M,D^{\bullet})) \cong \RHom_R(M,\RGam (D^{\bullet})) \cong \Hom_R(M,E) [-t],
	\]
	which (together with \ref{thm-1}) proves (a). Now the statement in (b) follows by the  Auslander-Buchsbaum formula. \end{proof}

An immediate application is the following Cohen-Macaulayness characterization, which is a particular compelling reason to consider generalized local cohomology.

\begin{corollary}\label{CM-charact} Let $(R,\mathfrak{m})$ be a Cohen-Macaulay  local ring possessing
	a dualizing complex $D^{\bullet}$. Let $M$ be a  finitely generated  $R$-module with $d = \dim_RM$ and $\pd_RM=p < \infty$. Then the following assertions are equivalent:
\begin{itemize}
		\item[(a)] $M$ is Cohen-Macaulay,
		\item[(b)] $H^{p+d}_{\mathfrak{m}}(M, K(R))  \neq  0$, and
		\item[(c)] $H^{p+d}_{\mathfrak{m}}(M, D^{\bullet})  \neq 0$. 
\end{itemize}
\end{corollary}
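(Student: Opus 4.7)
The plan is to derive both equivalences directly from Theorem \ref{theo01}, using the special structure afforded by the Cohen-Macaulayness of $R$.

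First I would observe that $(a) \Leftrightarrow (c)$ is essentially a restatement of Theorem \ref{theo01}(b). Indeed, since $R$ is Cohen-Macaulay we have $\delta_{\rm CM}(R) = 0$, and so Theorem \ref{theo01}(b) tells us that $H^{p+d}_{\mathfrak{m}}(M, D^{\bullet}) \neq 0$ if and only if $\delta_{\rm CM}(M) = 0$, i.e., if and only if $M$ is Cohen-Macaulay. This handles the equivalence between (a) and (c) with no further work.

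For $(b) \Leftrightarrow (c)$, the key observation is that when $R$ is Cohen-Macaulay, the normalized dualizing complex $D^{\bullet}$ is quasi-isomorphic to the canonical module $K(R)$ placed in cohomological degree zero. Concretely, in the notation of \ref{not-2}(C), we have $K^i(R) = H^{t-i}(D^{\bullet})$, and the Cohen-Macaulayness of $R$ forces $K^i(R) = 0$ for all $i \neq t$, together with $K^t(R) = K(R)$. Thus $H^j(D^{\bullet}) = 0$ for $j \neq 0$ and $H^0(D^{\bullet}) \cong K(R)$, which yields a quasi-isomorphism $D^{\bullet} \simeq K(R)$.

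Applying $\RHom_R(M, -)$ and then $\RGam$ preserves this quasi-isomorphism, so that
\[
\RGam(\RHom_R(M, D^{\bullet})) \;\cong\; \RGam(\RHom_R(M, K(R))).
\]
Taking $(p+d)$-th cohomology gives $H^{p+d}_{\mathfrak{m}}(M, D^{\bullet}) \cong H^{p+d}_{\mathfrak{m}}(M, K(R))$, whence (b) and (c) are equivalent. No step here looks like a serious obstacle; the only thing to be careful about is the normalization convention for $D^{\bullet}$ which makes the identification $D^{\bullet} \simeq K(R)$ clean in the Cohen-Macaulay case, and this is already fixed in \ref{not-2}(A).
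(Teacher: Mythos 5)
Your proposal is correct and follows exactly the route the paper intends: the equivalence (a)$\Leftrightarrow$(c) is Theorem \ref{theo01}(b) with $\delta_{\rm CM}(R)=0$, and (b)$\Leftrightarrow$(c) comes from the quasi-isomorphism $K(R)\cong D^{\bullet}$ for $R$ Cohen-Macaulay, which is the same identification the paper invokes in the proof of Corollary \ref{cor}. Nothing further is needed.
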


Evidently, this corollary covers the case of a Gorenstein ring $R$ (where $K(R) \cong R$).

Now we are prepared for our local duality theorem for generalized
local cohomology, over local rings that are not necessarily Cohen-Macaulay. In particular, it  will be fundamental to our purposes in Section \ref{Aus-Rei}. 

\begin{theorem} \label{thm-3}
	Let $(R,\mathfrak{m})$ be a $t$-dimensional local ring possessing
	a dualizing complex $D^{\bullet}$. Let $M,N$ be two finitely generated $R$-modules, with $\pd_RM < \infty$. Then, there are
	isomorphisms
	\[
	H^i_{\mathfrak{m}}(M,N) \cong \Hom_R(\Ext_R^{t-i}(N,M\otimes_R^{\rm{L}} D^{\bullet}),  E)
	\]
	for all $i \geq 0$.
\end{theorem}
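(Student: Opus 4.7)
The plan is to derive the stated isomorphism by composing the local duality theorem of Notation \ref{not-2}(B) with a reflexivity computation that rests on the perfectness of $M$ (which is guaranteed by the hypotheses that $M$ is finitely generated with $\pd_R M<\infty$).

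First I would identify $H^i_{\mathfrak{m}}(M,N)$ with the $i$-th cohomology of $\RGam(\RHom_R(M,N))$, which is legitimate by Definition \ref{defnew-1} together with the equivalence noted after Lemma \ref{thm-1} (since $M$ is finitely generated). The hypothesis $\pd_R M<\infty$ ensures that the complex $X:=\RHom_R(M,N)$ is bounded with finitely generated cohomology modules $\Ext_R^j(M,N)$, so I can apply local duality to $X$, obtaining
$$\RGam(\RHom_R(M,N))\;\simeq\;\Hom_R\bigl(\RHom_R(\RHom_R(M,N),D^{\bullet}),E\bigr)[-t].$$

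The heart of the argument is the simplification
$$\RHom_R(\RHom_R(M,N),D^{\bullet})\;\simeq\;\RHom_R(N,\,M\otimes_R^{\rm L}D^{\bullet}).\qquad(\ast)$$
To prove $(\ast)$ I would fix a bounded resolution $F_{\bullet}\to M$ by finitely generated free $R$-modules (available since $M$ is finitely generated of finite projective dimension) and observe that $\RHom_R(M,N)$ is represented by $\Hom_R(F_{\bullet},N)\cong F_{\bullet}^{\ast}\otimes_R N$, with $F_{\bullet}^{\ast}=\Hom_R(F_{\bullet},R)\simeq\RHom_R(M,R)$; equivalently, $\RHom_R(M,N)\simeq\RHom_R(M,R)\otimes_R^{\rm L}N$. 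The derived tensor-hom adjunction then gives
$$\RHom_R\bigl(\RHom_R(M,R)\otimes_R^{\rm L}N,\,D^{\bullet}\bigr)\;\simeq\;\RHom_R\bigl(N,\,\RHom_R(\RHom_R(M,R),D^{\bullet})\bigr).$$
Since $F_{\bullet}^{\ast}$ is also a bounded complex of finitely generated free modules, the biduality $F_{\bullet}^{\ast\ast}\cong F_{\bullet}$ yields a quasi-isomorphism $M\otimes_R^{\rm L}D^{\bullet}\simeq\RHom_R(\RHom_R(M,R),D^{\bullet})$ (both sides are represented termwise by $F_{\bullet}\otimes_R D^{\bullet}$). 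Substituting back proves $(\ast)$.

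Plugging $(\ast)$ into the displayed local duality isomorphism and taking the $i$-th cohomology produces
$$H^i_{\mathfrak{m}}(M,N)\;\cong\;\Hom_R\bigl(\Ext_R^{t-i}(N,\,M\otimes_R^{\rm L}D^{\bullet}),\,E\bigr),$$
which is the desired formula. The main technical obstacle is making sure the reflexivity step $M\otimes_R^{\rm L}D^{\bullet}\simeq\RHom_R(\RHom_R(M,R),D^{\bullet})$ is handled correctly in the derived category; this is where the hypothesis $\pd_R M<\infty$ is used in an essential way, as it is exactly what makes $M$ perfect and hence makes the evaluation morphism a quasi-isomorphism.
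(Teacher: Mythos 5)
Your proposal is correct and follows essentially the same route as the paper's proof: apply the local duality theorem of \ref{not-2}(B) to $\RHom_R(M,N)$, rewrite it via a finite free resolution as $\RHom_R(M,R)\otimes_R^{\rm L}N$, use tensor--hom adjunction, and identify $\RHom_R(\RHom_R(M,R),D^{\bullet})$ with $M\otimes_R^{\rm L}D^{\bullet}$ by the termwise evaluation isomorphism for a bounded complex of finitely generated free modules. The only difference is cosmetic: you phrase the steps in the derived category, while the paper writes the same chain of isomorphisms at the level of the chosen representatives $\Hom_R(F_{\bullet},N)$ and $F_{\bullet}\otimes_R D^{\bullet}$.
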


\begin{proof}
	Let $F_{\bullet}$ be a finite resolution of $M$ by finitely
	generated free $R$-modules. Then the Local Duality Theorem (see \ref{not-2}(B)) provides an isomorphism 
	\[
	\RGam (\RHom_R(M,N)) \cong \Hom_R(\Hom_R(\Hom_R(F_{\bullet},N),D^{\bullet}),E)[-t].
	\]
	Because $F_{\bullet}$ is a bounded complex of finitely generated $R$-modules, it follows that $\Hom_R(F_{\bullet},N) \cong \Hom_R(F_{\bullet},R) \otimes_R N$ and therefore 
	\[
	\Hom_R(\Hom_R(F_{\bullet},N),D^{\bullet}) \cong \Hom_R(N, \Hom_R(\Hom_R(F_{\bullet},R), D^{\bullet}))
	\]
	by adjointness. But now $$\Hom_R(\Hom_R(F_{\bullet},R), D^{\bullet})  \cong F_{\bullet} \otimes_R \Hom_R(R,  D^{\bullet}) \cong F_{\bullet} \otimes_R D^{\bullet}.$$ Putting the isomorphisms together, taking cohomology and using Lemma \ref{thm-1}, we get the claim. 
\end{proof}

The result above can be regarded as a generalization
of a theorem due to Suzuki (see \cite{nS}). As a corollary, which is also closely related to Herzog and Zamani  \cite[Theorem 2.1 (a)]{HZ}, we shall specialize it to the situation of a Cohen-Macaulay ring.

\begin{corollary} \label{cor}
	Let $(R,\mathfrak{m})$ be a $t$-dimensional Cohen-Macaulay local ring
	possessing a canonical module $K(R)$. Let $M,N$ be two finitely generated $R$-modules, with $\pd_RM < \infty$.
	Then, there are isomorphisms {\rm (}which are functorial in $N${\rm )}
	\begin{itemize}
		\item[(a)] $H^i_{\mathfrak{m}}(M,N) \cong
		\Hom_R(\Ext^{t-i}_R(N,M \otimes_R K(R)), E)$, and
		\item[(b)] $\Hom_R(H^i_{\mathfrak{m}}(M,N), E) \cong
		\Ext^{t-i}_R(N,M \otimes_RK(R)) \otimes_R \hat{R}$,
	\end{itemize}
	for all $i \geq 0$.
\end{corollary}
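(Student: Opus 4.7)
The plan is to derive Corollary \ref{cor} as a direct specialization of Theorem \ref{thm-3} to the Cohen-Macaulay setting. The starting observation is that when $R$ is Cohen-Macaulay of dimension $t$ with canonical module $K(R)$, the normalized dualizing complex $D^{\bullet}$ is quasi-isomorphic to $K(R)$ concentrated in cohomological degree zero. To verify this, I would apply the local duality theorem recorded in \ref{not-2}(B) to $X = R$: the vanishing $H^i_{\mathfrak{m}}(R) = 0$ for $i \neq t$ forces $H^j(D^{\bullet}) = 0$ for $j \neq 0$, and the relation $H^t_{\mathfrak{m}}(R) \cong \Hom_R(K(R), E)$ identifies $H^0(D^{\bullet})$ with $K(R)$.

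With this identification in hand, I would replace $M \otimes_R^{\rm{L}} D^{\bullet}$ in Theorem \ref{thm-3} by the ordinary tensor product $M \otimes_R K(R)$. This replacement is legitimate because $\pd_R M < \infty$ implies $\Tor_i^R(M, K(R)) = 0$ for all $i \geq 1$ — the very fact already invoked in the proof of Corollary \ref{cor-9} via \cite[Proposition 2.6]{rS} (see also \cite[Lemma 2.2]{Yoshida}). Combined with the quasi-isomorphism $D^{\bullet} \simeq K(R)$, this gives $M \otimes_R^{\rm{L}} D^{\bullet} \simeq M \otimes_R K(R)$, so that Theorem \ref{thm-3} specializes to the isomorphism
\[
H^i_{\mathfrak{m}}(M,N) \cong \Hom_R(\Ext^{t-i}_R(N, M \otimes_R K(R)), E),
\]
which is statement (a). Functoriality in $N$ is inherited from the functoriality already present in Theorem \ref{thm-3}, since it comes from the contravariant action of $\Ext$ on its first argument.

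For (b), I would apply $\Hom_R(-,E)$ to both sides of (a). Since $N$ and $M \otimes_R K(R)$ are finitely generated $R$-modules and $R$ is Noetherian, the module $X := \Ext^{t-i}_R(N, M \otimes_R K(R))$ is itself finitely generated. Matlis duality for finitely generated modules then yields $\Hom_R(\Hom_R(X,E),E) \cong X \otimes_R \hat{R}$, which gives (b).

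I do not anticipate any genuine obstacle: the argument is a clean specialization of Theorem \ref{thm-3}, with the only mild subtlety being to verify the correct degree placement in the identification $D^{\bullet} \simeq K(R)$ under the normalization convention of \cite[11.4.6]{SS}. Everything else is a formal manipulation using tools already developed earlier in the section.
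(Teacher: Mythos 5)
Your proposal is correct and follows essentially the same route as the paper: identify $D^{\bullet}$ with $K(R)$ in degree zero (the paper asserts this; you justify it), use $\Tor_i^R(M,K(R))=0$ for $i>0$ via the same references (\cite[Proposition 2.6]{rS}, \cite[Lemma 2.2]{Yoshida}) to replace $M\otimes_R^{\rm{L}}D^{\bullet}$ by $M\otimes_RK(R)$, and specialize Theorem \ref{thm-3}; the paper phrases the Tor step as $F_{\bullet}\otimes_RK(R)$ being a resolution of $M\otimes_RK(R)$, which is the same point. Your explicit Matlis-duality step for part (b) is a harmless addition the paper leaves implicit.
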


\begin{proof}
	First note the quasi-isomorphism $K(R) \cong D^{\bullet}$, since $R$ is a Cohen-Macaulay ring. Then there 
	is a quasi-isomorphism  $F_{\bullet} \otimes_R K(R) \cong F_{\bullet} \otimes_RD^{\bullet}$. But now $F_{\bullet} \otimes_R K(R)$ is a resolution of $M \otimes_R K(R)$ since $\Tor_i^R(M,K(R)) = 0$ for all $i > 0$ (see \cite[Proposition 2.6]{rS} or \cite[Lemma 2.2]{Yoshida}). This proves the statement as a consequence of \ref{thm-3}.
\end{proof}

By the works of Peskine and Szpiro \cite{PS} and Roberts \cite{pR}, the well-known Bass conjecture is known to be true. That is, a local ring possessing a finitely generated module of finite injective dimension is a Cohen-Macaulay ring. In view of this fact, we refer to the dual statement of \ref{cor} shown by Herzog and Zamani (see \cite[Theorem 2.1 (b)]{HZ}).

\begin{proposition} \label{cor1}
	Let $(R,\mathfrak{m})$ be a $t$-dimensional Cohen-Macaulay local ring
	possessing a canonical module $K(R)$. Let $M,N$ be two finitely generated $R$-modules, with $\id_RN < \infty$. Then, there are isomorphisms {\rm (}which are functorial in $M${\rm )}
	\begin{itemize}
		\item[(a)] $H^i_{\mathfrak{m}}(M,N) \cong
		\Hom_R(\Ext^{t-i}_R(\Hom_R(K(R),N),M ), E)$, and
		\item[(b)] $\Hom_R(H^i_{\mathfrak{m}}(M,N), E) \cong
		\Ext^{t-i}_R(\Hom_R(K(R),N),M ) \otimes_R \hat{R}$,
	\end{itemize}
	for all  $i \geq 0$.
\end{proposition}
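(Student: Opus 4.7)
The plan is to reduce to the situation of Corollary \ref{cor} by trading the hypothesis $\id_R N < \infty$ for the condition $\pd_R N'' < \infty$ on a suitable companion module, via Foxby--Sharp equivalence, and then run a derived-category computation in the spirit of the proof of Theorem \ref{thm-3}. Set $N'' := \Hom_R(K(R), N)$. Since $R$ is Cohen-Macaulay with canonical module and $\id_R N < \infty$, standard Foxby--Sharp facts (the injective-dimension counterpart of the vanishings invoked in the proof of Corollary \ref{cor}, cf.\ \cite[Proposition 2.6]{rS} and \cite[Lemma 2.2]{Yoshida}) give $\pd_R N'' < \infty$, the vanishings $\Ext^i_R(K(R), N) = 0 = \Tor_i^R(K(R), N'')$ for all $i > 0$, and the fact that the evaluation map induces an isomorphism $K(R) \otimes_R N'' \cong N$. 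In particular, this yields the derived isomorphism $N \cong K(R) \otimes_R^{\rm{L}} N''$, and $N'' \cong \RHom_R(K(R), N)$.

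The main computation would then proceed as follows. Since $N''$ is perfect (finitely generated of finite projective dimension), tensor-evaluation yields
\[
\RHom_R(M, N) \cong \RHom_R(M,\, K(R) \otimes_R^{\rm{L}} N'') \cong \RHom_R(M, K(R)) \otimes_R^{\rm{L}} N''.
\]
Applying $\RGam$ and using that $\RGam(-) \cong \check{C}_{\xx} \otimes_R -$ commutes with $- \otimes_R^{\rm{L}} N''$ by associativity of tensor product, together with the quasi-isomorphism $K(R) \cong D^{\bullet}$ (since $R$ is Cohen-Macaulay) and Theorem \ref{theo01}(a), one obtains
\[
\RGam(\RHom_R(M, N)) \cong \Hom_R(M, E)[-t] \otimes_R^{\rm{L}} N''.
\]
Passing to $i$-th cohomology yields $H^i_{\mathfrak{m}}(M, N) \cong \Tor_{t-i}^R(N'', \Hom_R(M, E))$.

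To finish (a), I would invoke the standard identity $\Tor_j^R(N'', \Hom_R(M, E)) \cong \Hom_R(\Ext^j_R(N'', M), E)$, which follows by resolving $N''$ by a finite complex of finitely generated free modules, using the natural isomorphism $F \otimes_R \Hom_R(M, E) \cong \Hom_R(\Hom_R(F, M), E)$ for finitely generated free $F$, and the exactness of $\Hom_R(-, E)$. Substituting $N'' = \Hom_R(K(R), N)$ and $j = t-i$ delivers (a). For (b), apply $\Hom_R(-, E)$ to (a) and use Matlis duality $\Hom_R(\Hom_R(X, E), E) \cong X \otimes_R \hat{R}$ for the finitely generated module $X = \Ext^{t-i}_R(\Hom_R(K(R), N), M)$.

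The main obstacle I anticipate is the Foxby--Sharp reduction in the first paragraph -- specifically verifying that $N'' = \Hom_R(K(R), N)$ is perfect and that the evaluation is a derived isomorphism $K(R) \otimes_R^{\rm{L}} N'' \cong N$. Once this is in hand, the rest is essentially bookkeeping with the same derived-category tools (the \v{C}ech representation of $\RGam$, Theorem \ref{theo01}(a), and Matlis duality) already used in the proofs of Theorem \ref{thm-3} and Corollary \ref{cor}.
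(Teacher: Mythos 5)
Your argument is correct, but it is worth noting that the paper does not actually prove Proposition \ref{cor1}: it simply quotes it as the ``dual statement'' of Corollary \ref{cor} established by Herzog and Zamani \cite[Theorem 2.1 (b)]{HZ}, after observing (via the Bass conjecture) that finite injective dimension of $N$ forces $R$ to be Cohen--Macaulay. Your proposal therefore supplies a self-contained derivation where the paper offers only a citation. The reduction you use is the right one: Sharp's equivalence \cite{rS} between finitely generated modules of finite injective dimension and those of finite projective dimension over a Cohen--Macaulay ring with canonical module gives exactly the package you list ($\pd_R N''<\infty$, $\Ext^{i}_R(K(R),N)=0=\Tor_i^R(K(R),N'')$ for $i>0$, and $K(R)\otimes_R N''\cong N$ by evaluation; the Tor-vanishing is the same fact \cite[Proposition 2.6]{rS}, \cite[Lemma 2.2]{Yoshida} the paper already invokes in Corollaries \ref{cor-9} and \ref{cor}). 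The remaining steps --- tensor-evaluation against the perfect complex $N''$, the degeneration $\RGam(\RHom_R(M,K(R)))\cong\Hom_R(M,E)[-t]$ from Theorem \ref{theo01}(a) together with $K(R)\cong D^{\bullet}$, the identity $\Tor_j^R(N'',\Hom_R(M,E))\cong\Hom_R(\Ext^j_R(N'',M),E)$, and Matlis duality for (b) --- are all valid and manifestly functorial in $M$, since $M$ enters only through $\Hom_R(M,-)$ at each stage. This is essentially the dualization mechanism behind the Herzog--Zamani proof, so your route is both correct and faithful to the intended source; its only cost is that it leans on the full strength of Sharp's equivalence rather than on a formal duality with Corollary \ref{cor}.
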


We finish this section by recording duality in the special case of Gorenstein rings. 

\begin{corollary} \label{cor-10}
	Let $(R,\mathfrak{m})$ be a $t$-dimensional  Gorenstein local ring, and let
	$M, N$ be two finitely generated $R$-modules. Then
	\[
	H^i_{\mathfrak{m}}(M,N) \cong
	\Hom_R(\Ext^{t-i}_R(N,M ), E)
	\]
	for all $i \geq 0$, if $\pd_RM < \infty$ or $\pd_RN < \infty$. In addition, $$\sup\{i \in \mathbb{N} \, | \, H^i_{\mathfrak{m}}(M,N) \not= 0\} =
	\dim R - \grade(\Ann_RN,M).$$
\end{corollary}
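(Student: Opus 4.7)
The plan is to reduce both cases to the Cohen--Macaulay duality results already established, observing that since $R$ is Gorenstein of dimension $t$ we have $K(R)\cong R$ (and $R$, suitably shifted, represents a normalized dualizing complex).

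For the case $\pd_R M<\infty$, I would apply Corollary \ref{cor}(a) with $K(R)=R$, which yields
\[
H^i_\mathfrak{m}(M,N)\,\cong\,\Hom_R(\Ext^{t-i}_R(N,M\otimes_R R),\,E)\,=\,\Hom_R(\Ext^{t-i}_R(N,M),\,E).
\]
For the case $\pd_R N<\infty$, I would invoke the classical fact that over a Gorenstein local ring a finitely generated module has finite projective dimension if and only if it has finite injective dimension (one direction: a finite free resolution of $N$ bounds $\id_R N$ by $t+\pd_R N$, since $\id_R R=t$). Hence $\id_R N<\infty$, so Proposition \ref{cor1}(a) applies; because $\Hom_R(K(R),N)\cong\Hom_R(R,N)\cong N$, the very same formula drops out. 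This proves the first assertion.

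For the supremum identity, I would use the isomorphism just established. Since $M$ and $N$ are finitely generated, $\Ext^{t-i}_R(N,M)$ is a finitely generated $R$-module, and as $E$ is an injective cogenerator over the local ring $R$, its Matlis dual vanishes if and only if the module itself is zero. Therefore $H^i_\mathfrak{m}(M,N)\neq 0$ precisely when $\Ext^{t-i}_R(N,M)\neq 0$, and consequently
\[
\sup\{i\in\mathbb{N}\mid H^i_\mathfrak{m}(M,N)\neq 0\}\,=\,t-\inf\{j\mid \Ext^j_R(N,M)\neq 0\}.
\]
The latter infimum is, by the standard Ext-characterization of grade (cf.\,\cite[Theorem 1.2.5]{BH}), equal to $\grade(\Ann_R N,\,M)$, giving the desired formula $t-\grade(\Ann_R N,M)=\dim R-\grade(\Ann_R N,M)$.

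The main obstacle, if there is one, lies entirely in handling the $\pd_R N<\infty$ case: one must recognize that the Gorenstein hypothesis converts finite projective dimension into finite injective dimension, so that Proposition \ref{cor1} becomes available. Everything else is a direct specialization of results already proved, together with the harmless Matlis-duality observation about nonvanishing of finitely generated modules.
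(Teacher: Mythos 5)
Your proposal is correct and follows essentially the same route as the paper: specialize Corollary \ref{cor} and Proposition \ref{cor1} using $K(R)\cong R$ (the paper leaves implicit, and you usefully spell out, that finite projective dimension implies finite injective dimension over a Gorenstein ring so that \ref{cor1} applies), and obtain the supremum formula from Matlis faithfulness together with the Ext-characterization of grade, which is exactly what the paper invokes via \ref{rem-4}(A). The only nitpick is the citation: you need the grade formula for the module $N$ itself, i.e.\ $\grade(\Ann_RN,M)=\inf\{j\mid\Ext^j_R(N,M)\neq 0\}$, which is \cite[1.2.10]{BH} rather than the $R/\mathfrak{a}$-version in 1.2.5.
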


\begin{proof}
	For a Gorenstein local ring $R$, we have $K(R) \cong R$.  Hence
	the first part follows by \ref{cor} and \ref{cor1}. For the non-vanishing, 
	we refer to \ref{rem-4}(A) in the next section.
\end{proof}

It is worth pointing out that the last claim of Corollary \ref{cor-10}  follows alternatively by \cite[Theorem 3.5]{DH}.

\section{Generalized Deficiency and Canonical Modules}
In \ref{not-2}(C) we used the notion of modules of deficiency as a
technical tool for a description of local cohomology modules.
Here we extend the definition in order to describe generalized local cohomology in the Gorenstein case.

\begin{definition} \label{def-2}
	Let $(R,\mathfrak{m})$ be a $t$-dimensional Gorenstein local ring, and let $M$ be a finitely generated $R$-module with $\pd_R M < \infty$.
	For a finitely generated $R$-module $N$, we define
	\[
	K^i(N,M) := \Ext_R^{t-i}(N,M) ,\;  i \in \mathbb{N},
	\]
	the {\it $i$-th module of deficiency of $N$ with respect to $M$}. Because of
	\ref{cor-10}, we have $H^i_{\mathfrak{m}}(M,N) \cong \Hom_R(K^i(N,M),  E)$. Note that for $M = R$ we recover the original definition of the
	$i$-th module of deficiency of $N$ (cf. \ref{not-2}(C)).
\end{definition}

In the following we will discuss some properties of the notion of
generalized modules of deficiency.

\begin{remark} \label{rem-4}
	(A) For a ring $R$ and two finitely
	generated $R$-modules $M, N$ with
	$\mathfrak{c} := \Ann_R N$, it follows that
	\[
	\grade (\mathfrak{c},M) = \inf \{i \in \mathbb{Z} \, | \, \Ext_R^i(N,M) \not= 0\}
	\]
	(see \cite[1.2.10]{BH}). We denote this integer  by $\grade(N,M)$. It is
	equal to the length of a maximal $M$-regular sequence contained in $\Ann_RN$.
	Therefore, with the notation of \ref{def-2}, $$\sup \{i \in \mathbb{Z} \, | \, K^i(N,M)\not= 0\} = \dim R -
	\grade(N,M).$$
	Note that $\grade(N,R) = \grade(N)$ is the usual definition of grade.\\
	(B) Now let $(R,\mathfrak{m})$ be a Gorenstein local ring. If $\pd_R M < \infty$ as in \ref{def-2}, it follows that $\id_R M < \infty$, with $\id_R M \leq t = \dim R$.
	Therefore $K^i(N,M) = 0$ for all $i < 0$ and $i > t$. Moreover,
	\[
	\Ass_R K^t(N,M) = \Ass_R \Hom_R(N,M)= \Ass_R M \cap \Supp_R N,
	\]
	and  $K^t(N,M) = 0$ if and only if $\Ass_R M \cap \Supp_R N$ is empty.\\
	(C) Let $\mathfrak{p}$ denote a prime ideal of a Gorenstein local ring
	$R$. Then $\dim R = \dim R/\mathfrak{p} + \dim R_{\mathfrak{p}}$
	and therefore
	\[
	K^i(N,M)\otimes_R R_{\mathfrak{p}} \cong K^{i-\dim R/\mathfrak{p}}(N_{\mathfrak{p}},M_{\mathfrak{p}})
	\]
	for all $i$. Hence we get $\dim K^i(N,M) \leq i$ for all $i$. This is a rather row estimate. Note that if $M \otimes_R N$ has finite length then $\dim_R K^i(N,M) = 0$ for all $i \in \mathbb{N}$, which follows from the general inclusion $\Supp_R \Ext_R^j(M,N) \subseteq \Supp_R M\otimes_RN$ .\\
	(D) As follows by (A) and (B), we know that $K^i(N,M) = 0$ for $i > s$ and
	$K^s(N,M) \not= 0$ for $s = t - \grade(N,M)$. In case $M = R$ (i.e.,
 ordinary local cohomology), we have $s= t - \grade(N)$, and note that $$\Ext_R^{t-\grade(N)}(N,R) = K^{\dim_R N}(N)$$ (by \ref{cor-10} with $M=R$) is precisely the ordinary canonical module
	of $N$. This motivates us to consider the definition below.
\end{remark}

\begin{definition}\label{def-3}
	Let $(R,\mathfrak{m})$ be a $t$-dimensional Gorenstein local ring. Let
	$M, N$ be two finitely generated $R$-modules, with $\pd_R M < \infty$. Then
	$$K^s(N,M): = \Ext_R^g(N,M),$$
	\noindent with $g = \grade(N,M)$ and $s = t-g$, is called {\it the
	generalized canonical module of $N$ with respect to $M$}.
	For $M = R$, we have
	$K^s(N,R)=K(N)$, which shows that our definition is a generalization of
	the usual notion for a single module.

	Furthermore, there is a canonical isomorphism
	\[
	H^s_{\mathfrak{m}}(M,N) \cong \Hom_R(K^s(N,M), E)
	\]
	as follows by the generalized local duality in the Gorenstein case (\ref{cor-10}).
\end{definition}

Our next result concerns the behavior of the generalized canonical module with respect to localization.

\begin{proposition} \label{prop-3} 	Let $(R,\mathfrak{m})$ be a $t$-dimensional Gorenstein local ring, and let $M, N$ be two finitely generated $R$-modules. Suppose that $\pd_RM < \infty$. Let $g = \grade(N,M)$
	and $s = t -g$. Then  $g = \grade(N_{\mathfrak{p}},M_{\mathfrak{p}})$ and
	\[
	K^s(N,M)\otimes_R R_{\mathfrak{p}} \cong K^{s-\dim R/\mathfrak{p}}(N_{\mathfrak{p}}, M_{\mathfrak{p}}) \not= 0
	\]
	for all $\mathfrak{p} \in \Supp_R K^s(N,M)$.
\end{proposition}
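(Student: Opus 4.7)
The plan is to reduce everything to Remark \ref{rem-4}(C) and to the standard fact that $\Ext$-modules commute with localization when the source is finitely presented.

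First I would recall that, since $R$ is Gorenstein, $R_{\mathfrak{p}}$ is a Gorenstein local ring with $\dim R_{\mathfrak{p}} = t - \dim R/\mathfrak{p}$; moreover $\pd_{R_{\mathfrak{p}}} M_{\mathfrak{p}} \leq \pd_R M < \infty$, so the generalized modules of deficiency $K^{\bullet}(N_{\mathfrak{p}}, M_{\mathfrak{p}})$ are defined over $R_{\mathfrak{p}}$ in the sense of Definition \ref{def-2}. Because $N$ is finitely generated and $R$ is Noetherian, the natural map
\[
\Ext_R^g(N,M) \otimes_R R_{\mathfrak{p}} \;\longrightarrow\; \Ext_{R_{\mathfrak{p}}}^g(N_{\mathfrak{p}}, M_{\mathfrak{p}})
\]
is an isomorphism for every $g$. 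With $t' := \dim R_{\mathfrak{p}} = t - \dim R/\mathfrak{p}$ and $s' := s - \dim R/\mathfrak{p}$, one has $t' - s' = t - s = g$, so this identifies $K^s(N,M) \otimes_R R_{\mathfrak{p}}$ with $K^{s'}(N_{\mathfrak{p}}, M_{\mathfrak{p}})$; this is precisely the content of Remark \ref{rem-4}(C), and it already yields the displayed isomorphism.

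Next I would handle the nonvanishing and the grade equality simultaneously. By hypothesis $\mathfrak{p} \in \Supp_R K^s(N,M)$, so the localization $K^s(N,M) \otimes_R R_{\mathfrak{p}} \cong \Ext_{R_{\mathfrak{p}}}^g(N_{\mathfrak{p}}, M_{\mathfrak{p}})$ is nonzero; by the above identification this is exactly $K^{s'}(N_{\mathfrak{p}}, M_{\mathfrak{p}}) \neq 0$. On the other hand, by the very definition $g = \grade(N,M)$ means $\Ext_R^i(N,M) = 0$ for all $i < g$, and localizing these vanishings gives $\Ext_{R_{\mathfrak{p}}}^i(N_{\mathfrak{p}}, M_{\mathfrak{p}}) = 0$ for $i < g$. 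Combining both, and invoking Remark \ref{rem-4}(A) applied to $R_{\mathfrak{p}}$,
\[
\grade(N_{\mathfrak{p}}, M_{\mathfrak{p}}) \;=\; \inf\{i \, | \, \Ext_{R_{\mathfrak{p}}}^i(N_{\mathfrak{p}}, M_{\mathfrak{p}}) \neq 0\} \;=\; g,
\]
which is the asserted grade identity.

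I do not anticipate a serious obstacle here: the argument is essentially a matter of unwinding the definitions, using compatibility of $\Ext$ with localization for finitely generated modules, and invoking the Gorenstein identity $\dim R = \dim R/\mathfrak{p} + \dim R_{\mathfrak{p}}$ to line up the indices in $K^{\bullet}$. The only mildly delicate point to double-check is that the localized shift $s - \dim R/\mathfrak{p}$ indeed matches $t' - g$, and that $\pd_{R_{\mathfrak{p}}} M_{\mathfrak{p}} < \infty$ so that $K^{s-\dim R/\mathfrak{p}}(N_{\mathfrak{p}}, M_{\mathfrak{p}})$ is legitimately defined in the sense of Definition \ref{def-2}; both are immediate.
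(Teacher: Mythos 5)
Your proposal is correct and follows essentially the same route as the paper: localize $\Ext_R^g(N,M)$, use the Gorenstein identity $\dim R=\dim R/\mathfrak{p}+\dim R_{\mathfrak{p}}$ to match the indices of the deficiency modules, and combine the support hypothesis with the localized vanishing of $\Ext^i$ for $i<g$ to get the grade equality. If anything, your derivation of $\grade(N_{\mathfrak{p}},M_{\mathfrak{p}})=g$ from $\mathfrak{p}\in\Supp_R K^s(N,M)$ is spelled out more explicitly than in the paper, which simply asserts it.
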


\begin{proof}
	First note that $g= \grade(N,M) = \grade(N_{\mathfrak{p}},M_{\mathfrak{p}})$ for $ \mathfrak{p} \in \Supp K^s(N,M)$. By localization, we get
	\[
		\Ext_R^s(N,M)  \otimes_R R_{\mathfrak{p}} \cong \Ext_{R_\mathfrak{p}}^s(N_{\mathfrak{p}},M_{\mathfrak{p}}) \not= 0.
	\]
	Then the claim follows by \ref{rem-4}(D).
\end{proof}

A more detailed discussion on the structure of $K^s(N,M)$ is given in the following results.

\begin{proposition} \label{lem-1}
	Let $(R,\mathfrak{m})$ be a  $t$-dimensional Gorenstein local ring, and let $M, N$ be two finitely generated $R$-modules. Assume that $\pd_RM < \infty$.
Let $g = \grade(N,M)$ and $s = t-g$. Then:
	\begin{itemize}
		\item[(a)] $\Ass K^s(N,M) = \Supp_RN \cap \Ass_R M/\xx M,$ where
		$\xx = x_1,\ldots, x_g$ denotes an $M$-regular sequence contained in $\Ann_RN$.
		\item[(b)] $\dim_R K^s(N,M) = \dim_R M \otimes_RN$.
		\item[(c)] $\depth_R K^s(N,M) > 0$ if and only if
		$s > \pd_R M$.
	\end{itemize}
\end{proposition}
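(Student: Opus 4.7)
\textbf{Proof plan for Proposition \ref{lem-1}.} My starting move is to pin down $K^s(N,M)$ as a Hom by exploiting the regular sequence $\xx$. Iterating the long exact Ext sequence arising from $0 \to M \xrightarrow{x_i} M \to M/x_i M \to 0$, and noting that multiplication by each $x_i \in \Ann_R N$ is the zero endomorphism of $\Ext_R^j(N,-)$, I obtain the natural isomorphism $K^s(N,M) = \Ext_R^g(N,M) \cong \Hom_R(N,\, M/\xx M)$. Part (a) then follows from the standard identity $\Ass_R \Hom_R(N,L) = \Supp_R N \cap \Ass_R L$ (valid for finitely generated $N$) applied with $L = M/\xx M$.

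Part (c) drops out of (a) combined with the Auslander-Buchsbaum formula. Since $N \neq 0$ forces $\mathfrak{m} \in \Supp_R N$, by (a) one has $\mathfrak{m} \notin \Ass_R K^s(N,M)$ iff $\mathfrak{m} \notin \Ass_R M/\xx M$, i.e.\ iff $\depth_R M/\xx M > 0$. Since $\xx$ is $M$-regular of length $g$, this reads $\depth_R M > g$, which under Auslander-Buchsbaum ($\depth_R M = t - \pd_R M$) is equivalent to $s = t - g > \pd_R M$.

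The main task is (b). From (a) together with the formula $\dim_R L = \max\{\dim R/\mathfrak{p} : \mathfrak{p} \in \Ass_R L\}$, I obtain $\dim_R K^s(N,M) = \max\{\dim R/\mathfrak{p} : \mathfrak{p} \in \Supp_R N \cap \Ass_R M/\xx M\}$. The upper bound $\dim_R K^s(N,M) \leq \dim_R M \otimes_R N$ is immediate from $\Ass_R M/\xx M \subseteq \Supp_R M$. For the reverse inequality, I would select a prime $\mathfrak{p}_0$ minimal in $\Supp_R M \cap \Supp_R N$ realizing $\dim R/\mathfrak{p}_0 = \dim_R M \otimes_R N$, then localize at $\mathfrak{p}_0$: the minimality forces $(M \otimes_R N)_{\mathfrak{p}_0}$ to be Artinian, and together with the Auslander-Buchsbaum formula for $M_{\mathfrak{p}_0}$ this should yield $\mathfrak{p}_0 R_{\mathfrak{p}_0} \in \Ass_{R_{\mathfrak{p}_0}}(M_{\mathfrak{p}_0}/\xx M_{\mathfrak{p}_0})$, whence $\mathfrak{p}_0 \in \Supp_R N \cap \Ass_R M/\xx M$ with the desired maximal dimension.

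The main obstacle, in my view, is to justify that the original sequence $\xx$ itself (rather than some longer $M_{\mathfrak{p}_0}$-regular sequence in $\Ann_{R_{\mathfrak{p}_0}} N_{\mathfrak{p}_0}$) remains maximal after localizing. This amounts to matching $g$ with $\grade(N_{\mathfrak{p}_0}, M_{\mathfrak{p}_0})$ at $\mathfrak{p}_0$, precisely the content of Proposition \ref{prop-3}, which in turn depends on having $K^s(N,M)_{\mathfrak{p}_0} \neq 0$. Unwinding this apparent circularity is the heart of the argument for (b); the remaining assertions are then bookkeeping.
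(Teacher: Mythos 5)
Your treatment of (a) and (c) is correct and is essentially the paper's own argument: iterating the long exact sequence of $0 \to M \xrightarrow{x_i} M \to M/x_iM \to 0$ and using that each $x_i \in \Ann_R N$ acts as zero on $\Ext^j_R(N,-)$ gives $K^s(N,M) \cong \Hom_R(N, M/\xx M)$, after which (a) is the identity $\Ass_R \Hom_R(N,L) = \Supp_R N \cap \Ass_R L$, and (c) follows from $\mathfrak{m} \in \Supp_R N$ together with the Auslander--Buchsbaum formula.

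For (b), however, you have not given a proof, and the step you flag as the "main obstacle" is a genuine gap that cannot be closed in the stated generality. The assertion you would need --- that $g = \grade(N_{\mathfrak{p}_0}, M_{\mathfrak{p}_0})$ at a minimal prime $\mathfrak{p}_0$ of $\Supp_R M \cap \Supp_R N$ of maximal coheight --- is false in general, and so is the conclusion of (b) itself. Take $R = k[[x,y,z]]$, $M = R \oplus R/\mathfrak{m}$ (so $\pd_R M = 3 < \infty$) and $N = R/xR$. Then $\Hom_R(N,M) = 0 \oplus (0:_{R/\mathfrak{m}} x) \cong R/\mathfrak{m} \neq 0$, so $g = 0$, $s = 3$, and $K^3(N,M) \cong R/\mathfrak{m}$ has dimension $0$, while $\dim_R M \otimes_R N = \dim_R N = 2$; at $\mathfrak{p}_0 = (x)$ one finds $\grade(N_{\mathfrak{p}_0}, M_{\mathfrak{p}_0}) = 1 \neq g$ and $K^s(N,M)_{\mathfrak{p}_0} = 0$. (Parts (a) and (c) remain consistent with this example.) You should be aware that the paper's own proof of (b) makes exactly the same unjustified leap, passing from the $\Ass$-equality of (a) to $\Supp_R K^s(N,M) = \Supp_R N \cap \Supp_R M/\xx M$; what (a) actually gives is $\dim_R K^s(N,M) = \max\{\dim R/\mathfrak{p} \, : \, \mathfrak{p} \in \Supp_R N \cap \Ass_R M/\xx M\}$, which agrees with $\dim_R M \otimes_R N$ only under extra hypotheses (for instance $M$ Cohen--Macaulay, where every associated prime of $M/\xx M$ has coheight $\dim_R M - g = \dim_R M \otimes_R N$ and the minimal primes of $\Supp_R M \cap \Supp_R N$ of that coheight lie among them). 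So your instinct that this localization of grade is the heart of (b) is exactly right; the honest conclusion is that this heart is missing from your argument and that the statement requires an additional hypothesis to be true.
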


\begin{proof}
	In the case of $g = 0$, the claim of (a) is clear by \ref{rem-4}(B).
	Now we choose $x \in \Ann_RN$ an $M$-regular element. Then the short exact sequence
	\[0 \to M \stackrel{x}{\longrightarrow} M \to M/xM \to 0\]
	\noindent yields
	$g = \grade(N,M) = \grade(N,M/xM) +1$ as well as an isomorphism
	\[
	\Ext_R^g(N,M) \cong \Ext_R^{g-1}(N,M/xM),
	\]
	as follows by the long exact cohomology sequence $\Ext_R^{\cdot}(N,\cdot)$.
	Therefore $K^s(N,M) \cong K^{s+1}(N,M/xM)$. By iterating this argument,
	it follows that
	\[
	\Ext_R^g(N,M) \cong \Hom_R(N,M/\xx M),
	\]
	where $\xx = x_1,\ldots,x_g$ denotes a maximal $M$-regular sequence contained
	in $\Ann_R N$. This proves the claim in (a).
	
	By view of the equality in (a), it follows that
	\[
	\Supp_R K^s(N,M) = \Supp_R N \cap \Supp_R M/\xx M = \Supp_R N \cap \Supp_R M
	\cap V(\xx R).
	\]
	Since $\xx R \subseteq \Ann_RN$, we have that $\Supp_R N \subseteq V(\xx R)$.
	Therefore $\Supp_R K^s(N,M) = \Supp_R M \otimes_R N$, as required.
	
	For the proof of (c), note that $\depth K^s(N,M) > 0$ if and only if
	\[
	\mathfrak{m} \not\in \Supp_R N \cap \Ass_R M/\xx M,
	\]
	i.e., if and only if
	$\depth_R M/\xx M = \depth_R M -g > 0$ for a maximal $M$-regular sequence
	$\xx = x_1,\ldots,x_g$ contained in $\Ann_RN$. The Auslander-Buchsbaum formula provides $\depth_R M = t - \pd_RM$, which proves (c).
\end{proof}

Note that, for $M = R$, we have $\depth K(N) > 0$ if and only if $\dim_R N > 0$.
This is the classically known result (see \cite{pS}).

\begin{proposition} \label{prop-4}
	With the notations and setting of \ref{lem-1}, suppose $s > \pd_RM$ and let $N' = N/H^0_{\mathfrak{m}}(N)$. Then $K^s(N,M) \cong K^s(N',M)$.
\end{proposition}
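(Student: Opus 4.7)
The plan is to apply $\Hom_R(-,M)$ to the canonical short exact sequence
\[
0 \to H^0_{\mathfrak{m}}(N) \to N \to N' \to 0
\]
and read off the desired isomorphism from the associated long exact sequence of Ext. To do this we must show that $\Ext_R^{g-1}(H^0_{\mathfrak{m}}(N),M) = 0$ and $\Ext_R^g(H^0_{\mathfrak{m}}(N),M) = 0$, where $g = \grade(N,M) = t - s$. Assuming $H := H^0_{\mathfrak{m}}(N) \neq 0$ (otherwise there is nothing to prove), the module $H$ is finitely generated and $\mathfrak{m}$-torsion, hence of finite length, so $\sqrt{\Ann_R H} = \mathfrak{m}$.

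The key input is Remark \ref{rem-4}(A): for any finitely generated $R$-module $H$ one has $\grade(H,M) = \grade(\Ann_R H, M)$. Since $\Ann_R H$ is $\mathfrak{m}$-primary this gives $\grade(H,M) = \grade(\mathfrak{m},M) = \depth_R M$. Because $R$ is Gorenstein (in particular Cohen-Macaulay) and $\pd_R M < \infty$, the Auslander-Buchsbaum formula yields $\depth_R M = t - \pd_R M$. The hypothesis $s > \pd_R M$ translates into
\[
g \;=\; t - s \;<\; t - \pd_R M \;=\; \depth_R M \;=\; \grade(H,M),
\]
so in particular $\Ext_R^i(H,M) = 0$ for all $i \leq g$, which covers both $i = g-1$ and $i = g$ (the case $g = 0$ is harmless since then $\Hom_R(H,M) = 0$ by positivity of $\depth_R M$).

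Feeding this vanishing into the long exact sequence
\[
\cdots \to \Ext_R^{g-1}(H,M) \to \Ext_R^g(N',M) \to \Ext_R^g(N,M) \to \Ext_R^g(H,M) \to \cdots
\]
produces the isomorphism $\Ext_R^g(N',M) \cong \Ext_R^g(N,M)$, i.e., $K^s(N',M) \cong K^s(N,M)$. The same vanishing also shows $\Ext_R^i(N',M) \cong \Ext_R^i(N,M) = 0$ for $i < g$, so $\grade(N',M) = g$ and the index $s$ on the left-hand side is the correct one as per Definition \ref{def-3}. The only genuine ingredient is the grade computation for finite length modules; once this is in place the argument is formal, so no real obstacle is anticipated.
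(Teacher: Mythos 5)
Your proof is correct and follows essentially the same route as the paper: the short exact sequence $0 \to H^0_{\mathfrak{m}}(N) \to N \to N' \to 0$, the vanishing $\Ext_R^i(H^0_{\mathfrak{m}}(N),M)=0$ for $i<\depth_R M$ since $H^0_{\mathfrak{m}}(N)$ is supported at $\mathfrak{m}$, and the translation of $s>\pd_RM$ into $g<\depth_RM$ via Auslander--Buchsbaum, fed into the long exact sequence. The only cosmetic difference is that you verify $\grade(N',M)=g$ directly from the Ext vanishing in the long exact sequence, whereas the paper argues via $\Supp_RN=\Supp_RN'$; both are fine.
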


\begin{proof}
	First note that $s > \pd_RM = t- \depth_RM$ yields that $\dim_RN >0$. Therefore
	$\grade(N,M) = \grade(N',M)$, because $\Supp_RN = \Supp_R N'$.
	Moreover $$\Ext_R^i(H^0_{\mathfrak{m}}(N),M) = 0 \mbox{ for all } i < \depth_R M,$$
	because $H^0_{\mathfrak{m}}(N)$ has support in $\{\mathfrak{m}\}$. Now the
	short exact sequence
	\[0 \to H^0_{\mathfrak{m}}(N) \to N \to N' \to 0\]
	\noindent yields -- by applying $\Ext_R^{\cdot}(\cdot,M)$ -- the isomorphism
	\[
	\Ext_R^g(N,M) \cong \Ext_R^g(N',M),
	\]
	because $g < \depth_RM$, by the assumption $s > \pd_RM$ and the Auslander-Buchsbaum formula.
\end{proof}

Let $k \geq 1$ denote an integer. For the next result, recall that an $R$-module $M$ satisfies the condition $S_k$ if
\[
\depth M_{\mathfrak{p}} \geq \min \{k,\,\dim M_{\mathfrak{p}}\} \;
\mbox{ for all }\, \mathfrak{p} \in \Supp_R M.
\]

A slight improvement of \ref{lem-1}(c) is the following proposition.

\begin{proposition} \label{prop-5}
	With the notations and setting of \ref{lem-1}, suppose $s > \pd_RM +1$.
	Then $K^s(N,M)$ satisfies condition $S_2$. In particular, $\depth_R K^s(N,M) > 1.$
\end{proposition}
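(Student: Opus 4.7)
My plan is to realize $K^s(N,M)$ as $\Hom_R(N,L)$ for a sufficiently high-depth auxiliary module $L$, then combine a direct depth-lemma argument with the localization result of Proposition \ref{prop-3} in order to upgrade the resulting global depth bound to the full $S_2$ condition.

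Concretely, let $\underline{x}=x_1,\ldots,x_g$ be an $M$-regular sequence contained in $\Ann_R N$, as in the proof of Lemma \ref{lem-1}(a), and set $L:=M/\underline{x}M$, so that $K^s(N,M)\cong \Hom_R(N,L)$. Because $R$ is Gorenstein and $\pd_R M<\infty$, the Auslander--Buchsbaum formula gives
\[
\depth_R L \,=\, \depth_R M-g \,=\, (t-\pd_R M)-g \,=\, s-\pd_R M,
\]
which is $\geq 2$ by the hypothesis $s>\pd_R M+1$. For the ``in particular'' statement, I would choose a presentation $F_1\to F_0\to N\to 0$ by finitely generated free modules, apply $\Hom_R(-,L)$ to obtain $0\to \Hom_R(N,L)\to L^a\to L^b$, and let $C$ denote the image of $L^a\to L^b$. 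The inclusion $C\hookrightarrow L^b$ forces $\Ass_R C\subseteq \Ass_R L$, hence $\depth_R C\geq 1$, and the depth lemma applied to $0\to \Hom_R(N,L)\to L^a\to C\to 0$ then yields $\depth_R\Hom_R(N,L)\geq \min(\depth L^a,\,\depth C+1)\geq 2$.

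For the full $S_2$ assertion, I would localize at an arbitrary $\mathfrak{p}\in\Supp_R K^s(N,M)$ and invoke Proposition \ref{prop-3} to identify $K^s(N,M)_\mathfrak{p}\cong K^{s-\dim R/\mathfrak{p}}(N_\mathfrak{p},M_\mathfrak{p})$. Since the localized triple $(R_\mathfrak{p},M_\mathfrak{p},N_\mathfrak{p})$ still fits the setup of Lemma \ref{lem-1} over the Gorenstein local ring $R_\mathfrak{p}$, with unchanged $g=\grade(N_\mathfrak{p},M_\mathfrak{p})$, rerunning the previous argument at $\mathfrak{p}$ gives the local depth bound $\depth (K^s)_\mathfrak{p}\geq \min(2,\depth L_\mathfrak{p})$; coupled with the inclusion $\Ass \Hom_{R_\mathfrak{p}}(N_\mathfrak{p},L_\mathfrak{p})\subseteq \Ass L_\mathfrak{p}$ (which forces $\dim (K^s)_\mathfrak{p}\leq \dim L_\mathfrak{p}$), this reduces the required $S_2$ inequality for $K^s(N,M)_\mathfrak{p}$ to the corresponding inequality for $L_\mathfrak{p}$. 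The main obstacle is exactly this last reduction: the global hypothesis $s>\pd_R M+1$ only controls $\depth_R L$, while depth can drop badly under localization, so to verify $\depth L_\mathfrak{p}\geq \min(2,\dim L_\mathfrak{p})$ on the locus actually supporting $\Hom(N_\mathfrak{p},L_\mathfrak{p})$ one needs the local Auslander--Buchsbaum identity $\depth M_\mathfrak{p}=\height\mathfrak{p}-\pd_{R_\mathfrak{p}}M_\mathfrak{p}$ together with a Proposition \ref{prop-4}-type reduction to trim low-dimensional contributions in $N_\mathfrak{p}$.
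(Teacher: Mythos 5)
Your argument for the final assertion, $\depth_R K^s(N,M)>1$, is correct and takes a genuinely different route from the paper's. You write $K^s(N,M)\cong\Hom_R(N,L)$ with $L=M/\underline{x}M$, note $\depth_R L=s-\pd_RM\geq 2$ by Auslander--Buchsbaum, and run the standard depth-lemma argument on $0\to\Hom_R(N,L)\to L^a\to C\to 0$ with $C\subseteq L^b$. The paper instead chooses an element $x$ regular on both $N$ and $\Ext_R^g(N,M)$, shows $\grade(N/xN,M)=g+1$, and embeds $\Ext_R^g(N,M)/x\Ext_R^g(N,M)$ into $\Ext_R^{g+1}(N/xN,M)$, whose depth is positive by Lemma~\ref{lem-1}(c) applied to the pair $(N/xN,M)$ --- which is exactly where $s>\pd_RM+1$, rather than $s>\pd_RM$, is used. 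Both arguments correctly give $\depth_RK^s(N,M)\geq\min\{2,\dim_RK^s(N,M)\}$ at the maximal ideal, and yours is arguably cleaner.

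For the full $S_2$ assertion there is a genuine gap, and you have put your finger on it yourself without closing it. After localizing via Proposition~\ref{prop-3}, your method yields $\depth K^s(N,M)_{\mathfrak{p}}\geq\min\{2,\depth L_{\mathfrak{p}}\}$, so what is needed is $\depth L_{\mathfrak{p}}=\depth M_{\mathfrak{p}}-g\geq\min\{2,\dim K^s(N,M)_{\mathfrak{p}}\}$, i.e.\ the localized hypothesis $s_{\mathfrak{p}}>\pd_{R_{\mathfrak{p}}}M_{\mathfrak{p}}+1$, where $s_{\mathfrak{p}}=s-\dim R/\mathfrak{p}=\dim R_{\mathfrak{p}}-g$. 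The global hypothesis controls $\depth_RM-g$ but not $\depth M_{\mathfrak{p}}-g$, and no Auslander--Buchsbaum or Proposition~\ref{prop-4}-type manipulation produces the latter from the former. You should also be aware that the paper's own proof passes over the same point: the sentence ``it will be enough to show that $\depth K^s(N,M)\geq\min\{2,\dim_RK^s(N,M)\}$'' tacitly assumes the argument can be rerun at every $\mathfrak{p}\in\Supp_RK^s(N,M)$, which again requires $s_{\mathfrak{p}}>\pd_{R_{\mathfrak{p}}}M_{\mathfrak{p}}+1$. In fact the obstruction is not removable as stated: taking $N=R$ (so $g=0$, $s=t$ and $K^s(R,M)=M$), the proposition would assert that every finitely generated $M$ with $\pd_RM<\infty$ and $\depth_RM\geq 2$ satisfies $S_2$; this fails for $M=R/(a^2,ab)$ over $R=\Bbbk[[a,b,c,d]]$, where $\pd_RM=\depth_RM=2$ but $(a,b)\in\Ass_RM$, so $\depth M_{(a,b)}=0<1=\dim M_{(a,b)}$. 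So your reduction is the right diagnosis rather than a fixable oversight: completing the proof requires an additional hypothesis guaranteeing that $s_{\mathfrak{p}}>\pd_{R_{\mathfrak{p}}}M_{\mathfrak{p}}+1$ for all $\mathfrak{p}$ in the support (equivalently, that $L$ itself satisfies $S_2$ along $\Supp_RN$).
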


\begin{proof}
	We have $g= \grade(N,M) = \grade(N_{\mathfrak{p}},M_{\mathfrak{p}})$ for $\mathfrak{p} \in \Supp K^s(N,M)$.
	By the assumption, we get $g < \depth_RM +1$. Because of
	\[
	\depth_R M \leq \depth_{R_{\mathfrak{p}}} M_{\mathfrak{p}} + \dim R/\mathfrak{p} \mbox{ for all } \mathfrak{p} \in \Supp_RM
	\]
	(see the consequence of \cite[1.10]{pS}), it follows that
	\[
	\grade(N_{\mathfrak{p}},M_{\mathfrak{p}}) -\dim R/\mathfrak{p} < \depth M_{\mathfrak{p}} +1.
	\]
	By view of \ref{rem-4}(D), this yields the  isomorphism
	$
	K^s(N,M) \otimes_R R_{\mathfrak{p}} \cong K^s(N_{\mathfrak{p}},M_{\mathfrak{p}})$.
	In order to prove the claim, it will be enough to show that
	\[
	\depth K^s(N,M) \geq \min \{2,\,\dim_R K^s(N,M)\}.
	\]
	If $ \dim_R K^s(N,M) = 0$, there is nothing to show. In the case
	$\dim_R K^s(N,M) = 1$, the claim follows by \ref{lem-1}(c). So assume
	$ \dim_R K^s(N,M) \geq 2$.
	In virtue of \ref{lem-1} and \ref{prop-4}, we have that $K^s(N,M) \cong
	K^s(N',M)$ and $$\depth_R K^s(N',M) >0,$$ where $N' = N/H^0_{\mathfrak{m}}(N)$.
	That is, without loss of generality we may assume $\depth_RN >0$. Therefore we may choose an element $x \in \mathfrak{m}$ that is $N$-regular and also
	$K^s(N,M)$-regular (see \ref{lem-1}(c)).
	The short exact sequence \[0 \to N \stackrel{x}{\longrightarrow} N\to N/xN
	\to 0\] \noindent induces an exact sequence
	\[
	0 \to \Ext_R^g(N/xN,M) \to \Ext_R^g(N,M) \stackrel{x}{\longrightarrow}
	\Ext_R^g(N,M) \to \Ext_R^{g+1}(N/xN,M).
	\]
	Since $x$ is $\Ext_R^g(N,M)$-regular, it follows that $\Ext_R^g(N/xN,M) = 0$. Since $\Ext_R^g(N,M)$ does not vanish, Nakayama's lemma implies that
	$\Ext_R^{g+1}(N/xN,M) \not= 0$ and therefore $\grade(N/xN,M) = g+1$.
	The exact sequence yields an injection
	\[
	0 \to  \Ext_R^g(N,M)/x \Ext_R^g(N,M) \to  \Ext_R^{g+1}(N/xN,M).
	\]
	By \ref{lem-1},  $ \Ext_R^{g+1}(N/xN,M)$ has positive
	depth. By the injection, we conclude that $\depth_R \Ext_R^g(N,M) \geq 2$.
\end{proof}

\section{Application: Prescribed Bound For Projective Dimension}\label{Aus-Rei}

Our main result in this section is the following application of our general local duality theorem (\ref{thm-3}), concerning the detection of a prescribed upper bound for the projective dimension of a module satisfying suitable cohomological conditions (similar results, in the particular case of Cohen-Macaulay rings with canonical module, can be found in \cite{Cle-Vic}). We tacitly maintain the setting of \ref{not-2}.

\begin{theorem}\label{main-result} Let $(R,\mathfrak{m})$ be a local ring with dimension $t$ and depth $r\geq 1$, possessing a dualizing complex $D^{\bullet}$. Let $M$ and $N$ be finitely generated $R$-modules with $\pd_RM < \infty$ and $\pd_RN < \infty$, satisfying
	$$\Ext_R^{j}(N,M\otimes_R^{\rm{L}} D^{\bullet}) \, = \, 0, \, \, \, \, j=t-r+i, \ldots, t$$
	for some positive integer $i \leq r$. Then, $\pd_R N<i$.
\end{theorem}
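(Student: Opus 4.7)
The plan is to combine three ingredients already in hand: the generalized local duality theorem just established (Theorem \ref{thm-3}), the Bijan-Zadeh sharp non-vanishing (Proposition \ref{prop-1}), and the Auslander-Buchsbaum formula (available since $\pd_R N<\infty$). The guiding idea is to turn the vanishing of the $\Ext$ modules into vanishing of a range of generalized local cohomology modules, read off a lower bound on $\depth_R N$, and then convert that bound into an upper bound on $\pd_R N$.

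First I would apply Theorem \ref{thm-3}: because $\pd_R M<\infty$ and $R$ admits a dualizing complex $D^{\bullet}$, there is a natural isomorphism
\[
H^{j}_{\mathfrak{m}}(M,N)\;\cong\;\Hom_R\!\bigl(\Ext_R^{t-j}(N,M\otimes_R^{\rm L}D^{\bullet}),\,E\bigr)
\]
for every $j\geq 0$. Since $\Hom_R(-,E)$ is a faithful functor, the hypothesis that $\Ext_R^{j}(N,M\otimes_R^{\rm L}D^{\bullet})=0$ for $j=t-r+i,\ldots,t$ translates, via the substitution $j\mapsto t-j$, into
\[
H^{j}_{\mathfrak{m}}(M,N)=0\quad\text{for every }j=0,1,\ldots,r-i.
\]

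Next I would invoke Proposition \ref{prop-1} with $\mathfrak{a}=\mathfrak{m}$. Since one may assume $M\neq 0$ (otherwise the statement is vacuous), maximality of $\mathfrak{m}$ gives $\mathfrak{m}+\Ann_R M=\mathfrak{m}$, so the Bijan-Zadeh invariant simplifies to
\[
s\;=\;\grade(\mathfrak{m}+\Ann_R M,\,N)\;=\;\grade(\mathfrak{m},N)\;=\;\depth_R N.
\]
Proposition \ref{prop-1} then asserts that $H^{s}_{\mathfrak{m}}(M,N)\neq 0$, while we have just shown that $H^{j}_{\mathfrak{m}}(M,N)=0$ for $j\leq r-i$. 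Comparing these forces $s\geq r-i+1$, i.e., $\depth_R N\geq r-i+1$.

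Finally, the Auslander-Buchsbaum formula (which applies precisely because $\pd_R N<\infty$) gives
\[
\pd_R N\;=\;\depth R-\depth_R N\;=\;r-\depth_R N\;\leq\;r-(r-i+1)\;=\;i-1,
\]
which is the required bound $\pd_R N<i$. I do not foresee a serious obstacle: the argument is essentially a three-step chain where all hard work has been absorbed into Theorem \ref{thm-3} and Proposition \ref{prop-1}. The only delicate point is the tacit hypothesis $M\neq 0$, needed to ensure $\mathfrak{m}+\Ann_R M=\mathfrak{m}$; this is the one place where a reader should be careful, and it is precisely what makes the non-vanishing half of \ref{prop-1} kick in at $j=\depth_R N$ rather than being vacuous.
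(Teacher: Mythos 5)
Your proof is correct and follows essentially the same route as the paper: dualize the Ext hypothesis through Theorem \ref{thm-3} to kill $H^{j}_{\mathfrak{m}}(M,N)$ for $j\leq r-i$, use Proposition \ref{prop-1} (with $\mathfrak{a}=\mathfrak{m}$, $M\neq 0$) to identify $\depth_R N$ as the least non-vanishing degree, and finish with Auslander--Buchsbaum. Your explicit remark that $M\neq 0$ is needed for $\grade(\mathfrak{m}+\Ann_R M,N)=\depth_R N$ is a reasonable precision (the paper imposes it only later, in Corollary \ref{cor1-free}), though calling the $M=0$ case ``vacuous'' is not quite accurate since the conclusion concerns $N$ alone.
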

\begin{proof} Since $\pd_RM$ is finite, \ref{thm-3} gives isomorphisms
	\[
	H^n_{\mathfrak{m}}(M,N) \, \cong \, \Hom_R(\Ext_R^{t-n}(N, M\otimes_R^{\rm{L}} D^{\bullet}),  E)
	\]
	for all $n \geq 0$. Since, by hypothesis, $\Ext_R^{j}(N, M\otimes_R^{\rm{L}} D^{\bullet})$ vanishes for $j=t-r+i, \ldots, t$, we get   
$$H^n_\mathfrak{m}(M, N)   \, = \, 0$$ for all $n=0,\ldots, r-i$. Further, because ${\rm depth}_RN$ is the least integer $l\geq 0$ for which $H^l_{\mathfrak{m}}(M,N)\neq 0$ by \ref{prop-1} (see also \cite[Theorem 2.3]{nS}), we conclude that ${\rm depth}_R N> r-i$. Finally, we apply the Auslander-Buchsbaum formula.
\end{proof}

\begin{corollary}\label{main-cor} Let $(R,\mathfrak{m})$ be a local ring with dimension $t$ and depth $r\geq 1$, possessing a dualizing complex $D^{\bullet}$. Let $N$  be a finitely generated $R$-module with $\pd_RN < \infty$. For a positive integer $i\leq r$, assume any of the following conditions:
	\begin{itemize}
		\item[(a)] $\Ext_R^{j}(N, D^{\bullet})=0$\, for\, $j=t-r+i,\ldots, t$;
		
		\item[(b)] $\Ext_R^{j}(N, N\otimes_R^{\rm{L}} D^{\bullet})=0$ \, for\, $j=t-r+i,\ldots, t$.
		
	\end{itemize}
Then, $\pd_R N<i$.
\end{corollary}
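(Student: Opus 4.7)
The plan is to observe that Corollary \ref{main-cor} follows immediately from Theorem \ref{main-result} by making two different choices of the auxiliary module $M$. The theorem is set up precisely so that both hypotheses (a) and (b) are instances of its single cohomological vanishing condition $\Ext_R^{j}(N, M\otimes_R^{\rm{L}} D^{\bullet})=0$ for $j=t-r+i,\ldots,t$, with no additional work needed on the derived tensor product side.

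For part (a), I would take $M=R$. Then $\pd_R M = 0 < \infty$ and $M \otimes_R^{\rm{L}} D^{\bullet} \simeq D^{\bullet}$ in the derived category, so the vanishing hypothesis of Theorem \ref{main-result} becomes exactly $\Ext_R^{j}(N, D^{\bullet})=0$ for $j=t-r+i,\ldots,t$. The conclusion $\pd_R N < i$ is then automatic from the theorem.

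For part (b), I would take $M=N$. Since we assume $\pd_R N < \infty$, both finiteness hypotheses $\pd_R M < \infty$ and $\pd_R N < \infty$ in Theorem \ref{main-result} hold. The vanishing condition reads $\Ext_R^{j}(N, N\otimes_R^{\rm{L}} D^{\bullet})=0$ for $j=t-r+i,\ldots,t$, which is precisely hypothesis (b). Applying Theorem \ref{main-result} yields $\pd_R N < i$.

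Since both parts reduce to direct applications of Theorem \ref{main-result}, there is essentially no obstacle; the only point to be aware of is that $N \otimes_R^{\rm{L}} D^{\bullet}$ must be interpreted in the derived category (so that $R \otimes_R^{\rm{L}} D^{\bullet} \simeq D^{\bullet}$ without any flatness subtlety), which is the convention already in force in \ref{not-2} and throughout Section 3. No further computation or spectral sequence argument is required.
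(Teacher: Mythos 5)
Your proposal is correct and coincides with the paper's own proof, which likewise obtains (a) by taking $M=R$ and (b) by taking $M=N$ in Theorem \ref{main-result}. The extra remarks about interpreting $N\otimes_R^{\rm{L}} D^{\bullet}$ in the derived category are consistent with the conventions of \ref{not-2} and add nothing problematic.
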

\begin{proof} Item (a) (resp. (b)) follows readily by taking $M=R$ (resp. $M=N$) in \ref{main-result}.
\end{proof}

\begin{remark}\label{Jo-rem} We point out that \ref{main-cor}(b) is somewhat related to an interesting question raised by Jorgensen (see \cite[Question 2.7]{jor}). Precisely, it was asked whether $\pd_RN<\infty$ and ${\rm Ext}_R^n(N, N)=0$ imply $\pd_RN<n$, for some positive integer $n$, where the local ring $R$ is assumed to be a complete intersection of positive codimension. In our result, $R$ can be taken  much more general but on the other hand we need the vanishing of more cohomology modules which, in addition, involve tensoring with $D^{\bullet}$. Below we propose a generalization of Jorgensen's problem.

\end{remark}

\begin{question}\label{Jo-ques} Let $(R,\mathfrak{m})$ be a local ring with positive depth, possessing a dualizing complex $D^{\bullet}$ and let $N$ be a finitely generated $R$-module with $\pd_RN < \infty$. If $$\Ext_R^{n}(N,\, N\otimes_R^{\rm{L}} D^{\bullet}) \, = \, 0$$ for some positive integer $n$, is it true that $\pd_R N<n$\,? In particular, we suggest to consider Jorgensen's question with $R$ Gorenstein.

\end{question}

Next, we turn to the derivation of freeness criteria as well as some observations and questions. In \ref{main-result} and \ref{main-cor}, the case $i=1$ yields immediately the following results.

\begin{corollary}\label{cor1-free} Let $(R,\mathfrak{m})$ be a local ring with dimension $t$ and depth $r\geq 1$, possessing a dualizing complex $D^{\bullet}$. Let $M$ and $N$ be finitely generated $R$-modules with $M\neq 0$,  $\pd_RM < \infty$ and $\pd_RN < \infty$, satisfying
	$$\Ext_R^{j}(N,M\otimes_R^{\rm{L}} D^{\bullet}) \, = \, 0, \, \, \, \, j=t-r+1, \ldots, t.$$
	Then, $N$ is a free $R$-module.
\end{corollary}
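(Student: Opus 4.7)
The plan is to apply Theorem \ref{main-result} directly with the parameter $i = 1$. The hypothesis in the corollary matches exactly the vanishing range $j = t-r+i, \ldots, t$ of that theorem in the case $i=1$, namely $j = t-r+1, \ldots, t$. So all the assumptions of \ref{main-result} are met (finite projective dimensions of $M$ and $N$, positive depth $r$, existence of a dualizing complex, and the prescribed Ext vanishing with $M \otimes_R^{\rm L} D^{\bullet}$).

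Applying \ref{main-result} then yields the inequality $\pd_R N < 1$, i.e., $\pd_R N = 0$. Since $R$ is local (Noetherian) and $N$ is finitely generated, a module of projective dimension zero is projective and hence free (if $N = 0$ the statement is trivial; otherwise any finitely generated projective module over a local ring is free by the standard Nakayama-type argument on a minimal free cover).

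There is essentially no obstacle: the result is a direct specialization, so the only thing to verify is that the index shift is read correctly, i.e., that $i=1$ produces exactly the claimed range of $j$'s and the correct bound $\pd_R N < 1$. The hypothesis $r \geq 1$ ensures $i=1 \leq r$, so the theorem applies without further constraints. No separate use of $M \neq 0$ is needed in the argument itself, but this assumption guarantees that the hypotheses of \ref{main-result}, which require a meaningful module $M$ to tensor against $D^{\bullet}$, are non-degenerate (otherwise the Ext vanishing hypothesis would be vacuous and useless).
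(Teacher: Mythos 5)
Your proposal is correct and is exactly the paper's argument: Corollary \ref{cor1-free} is obtained by specializing Theorem \ref{main-result} to $i=1$, which gives $\pd_R N<1$, hence $N$ is free since a finitely generated module of projective dimension zero over a local ring is free. Your side remark on the role of $M\neq 0$ is also apt, as the Ext-vanishing hypothesis would be vacuous for $M=0$.
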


\begin{corollary}\label{cor2-free} Let $(R,\mathfrak{m})$ be a local ring with dimension $t$ and depth $r\geq 1$, possessing a dualizing complex $D^{\bullet}$. Let $N$  be a finitely generated $R$-module with $\pd_RN < \infty$. Assume any of the following conditions:
	\begin{itemize}
		\item[(a)] $\Ext_R^{j}(N, D^{\bullet})=0$\, for\, $j=t-r+1,\ldots, t$;
		\item[(b)] $\Ext_R^{j}(N, N\otimes_R^{\rm{L}} D^{\bullet})=0$ \, for\, $j=t-r+1,\ldots, t$.
	\end{itemize}
Then, $N$ is a free $R$-module.
\end{corollary}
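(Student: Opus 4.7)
The plan is to apply Corollary \ref{main-cor} directly, taking $i=1$. Under either hypothesis (a) or (b), the vanishing range $j = t-r+1, \ldots, t$ is exactly the range $j = t-r+i, \ldots, t$ from \ref{main-cor} specialized at $i = 1 \leq r$ (this is permitted since $r \geq 1$). Hence \ref{main-cor} yields $\pd_R N < 1$, i.e., $\pd_R N = 0$.

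Next I would invoke the standard fact that a finitely generated module of projective dimension zero over a local ring is free: projective dimension zero means $N$ is projective, and a finitely generated projective module over a Noetherian local ring is free (this is a classical consequence of Nakayama's lemma applied to a minimal free cover). This immediately converts the conclusion $\pd_R N = 0$ into the freeness of $N$.

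The argument splits neatly into the two cases, but neither requires any new work beyond the citation: case (a) is \ref{main-cor}(a) with $i=1$, and case (b) is \ref{main-cor}(b) with $i=1$. There is no real obstacle here, since the heavy lifting (the generalized local duality of \ref{thm-3} together with the depth/Auslander--Buchsbaum argument used in the proof of \ref{main-result}) has already been carried out upstream. The only thing to verify is that the arithmetic of the vanishing range is consistent with the hypothesis $i \leq r$ of \ref{main-cor}, which is immediate from $1 \leq r$. Thus the proof reduces to a one-line appeal to \ref{main-cor} followed by the observation that $\pd_R N = 0$ implies freeness.
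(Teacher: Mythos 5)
Your proposal is correct and matches the paper's own derivation: the paper obtains Corollary \ref{cor2-free} precisely by specializing \ref{main-cor} to $i=1$ (using $r\geq 1$), which gives $\pd_RN<1$, hence $\pd_RN=0$ and $N$ is free. No further comments are needed.
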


\begin{example}\rm We want to illustrate that in \ref{cor2-free} the condition $\pd_RN<\infty$ cannot be removed, at least in the case of a 1-dimensional complete intersection local ring. Let $\Bbbk$ be a field, $R=\Bbbk [[x, y]]/(xy)$ and $N=R/xR$. Then we have
        \[
		{\rm Ext}^1_R(N, R) \, \cong \, 0:_R(0:_R x)/xR \, = \, (0:_Ry)/xR \, = \, 0,
		\] and
        \[
		{\rm Ext}^1_R(N, N) \, = \, {\rm Ext}^1_R(R/xR, N) \, \cong \, 0:_N(0:_R x)/xN \, = \, 0:_Ny \, = \, 0,
		\]
but clearly $N$ is not $R$-free.
\end{example}

\begin{remark}\label{AR-rem} Let us recall the celebrated Auslander-Reiten conjecture, originally raised in \cite{auslander} about 45 years ago. If $R$ is a local ring and $N$ is a finitely generated $R$-module satisfying
$${\rm Ext}^i_R(N, R) \, = \, {\rm Ext}^i_R(N, N) \, = \, 0$$ for every $i>0$, then the conjecture predicts that $N$ must be free. The problem remains open in this general form, but it has been settled affirmatively in some important cases; we refer, e.g., to \cite{araya}, \cite{sgoto}, \cite{hule} and their suggested references. What we want to point out herein is that the freeness criteria established in our \ref{cor2-free} can be said to be of {\it Auslander-Reiten type} due to the similarity with the hypotheses of the conjecture (even though our result requires the vanishing of finitely many cohomology modules, while of course the condition $\pd_RN<\infty$ plays a key role). This, in turn, inspires us to propose the following Auslander-Reiten type problem:

\end{remark}

\begin{question}\label{AR-ques} Let $R$ be a local ring possessing a dualizing complex $D^{\bullet}$ and let $N$ be a finitely generated $R$-module such that $${\rm Ext}^i_R(N, D^{\bullet}) \, = \, {\rm Ext}^i_R(N, N\otimes_R^{\rm{L}} D^{\bullet}) \, = \, 0$$ for every $i>0$. Is it true that $N$ must be free? Setting $t=\dim R$ and $r={\rm depth}\,R$ as in \ref{cor2-free}, does $i> t-r$ suffice? If $R$ is assumed to be Cohen-Macaulay with a canonical module $K(R)$, then our question asks whether $N$ is free provided that $${\rm Ext}^i_R(N, K(R)) \, = \, {\rm Ext}^i_R(N, N\otimes_RK(R)) \, = \, 0$$ for all $i>0$. Notice that, in case $R$ is Gorenstein, this question matches exactly the original statement of the Auslander-Reiten conjecture.

\end{question}

\bigskip

\noindent{\bf Acknowledgements.} The first three authors were partially supported by  CNPq-Brazil grant 421440/2016-3. The third-named author was also supported by CNPq-Brazil grant 301029/2019-9 and by FAPESP-Brazil grant 2019/21843-2. The authors also wish to express their gratitude to the anonymous referees for their contributions in improving the paper, for instance the conceptual point of view suggested in \ref{defnew-1}.


\begin{thebibliography}{999999}
	
\bibitem{jL} {\sc J. Alonso Tarr\'io, A. Jeremias L\'opez, J. Lipman}, Local homology and cohomology of schemes, Ann. Sci. \'Ec. Norm. Sup\'er. {\bf 30} (1997), 1--39.
	
\bibitem{araya} {\sc T. Araya}, The Auslander-Reiten conjecture for Gorenstein rings, Proc. Amer. Math. Soc. \textbf{137} (2009), 1941--1944.


\bibitem{auslander}{\sc M. Auslander, I. Reiten}, On a generalized version of the Nakayama conjecture, Proc. Amer. Math. Soc. \textbf{52} (1975), 69--74.



\bibitem{BZm} \textsc{M. H. Bijan-Zadeh}, A common generalization of local 			cohomology theories, Glasg. Math. J. \textbf{21} (1980) 173--181.

\bibitem{B-Sharp} \textsc{M. Brodmann, R. Y. Sharp}, \textsl{Local Cohomology,
	An Algebraic Introduction with Geometric Applications}, Sec. Edt., Cambridge Univ. Press, 2013.
	
\bibitem{BH} \textsc{W. Bruns, J. Herzog}, \textsl{Cohen-Macaulay rings}, Sec. Edt., Cambridge Univ. Press, 1998.

\bibitem{DH}{\sc K. Divaani-Aazar, A. Hajikarimi}, Generalized local cohomology modules and homological Gorenstein
dimensions, Comm. Algebra {\bf 39} (2011), 2051--2067.

\bibitem{DST}{\sc K. Divaani-Aazar, R. Sazeedeh, M. Tousi}, On vanishing of generalized local cohomology modules, Algebra Colloq. {\bf 12} (2005), 213--218.
	
\bibitem{sgoto} {\sc S. Goto, R. Takahashi}, On the Auslander-Reiten conjecture for Cohen-Macaulay local rings, Proc. Amer. Math. Soc. \textbf{145} (2017), 3289--3296.

\bibitem{aG} \textsc{A. Grothendieck}, \textsl{Local cohomology,	notes by R. Hartshorne}, Lect. Notes in Math., {\bf 41}, Springer, 1967.

\bibitem{RD} \textsc{R. Hartshorne}, \textsl{Residues and Duality}, Lect. Notes in Math., {\bf 20}, Springer, 1966.

\bibitem{HV}  S. {\sc H. Hassanzadeh, A. Vahidi}, On vanishing and cofinitness of generalized local cohomology
modules, Comm. Algebra \textbf{37} (2009), 2290--2299.
	
\bibitem{jH} \textsc{J. Herzog}, Komplexe, Aufl\"osungen und Dualit\"at in
	der lokalen Algebra, Habilitationsschrift, Universit\"at Regensburg, 1970.

\bibitem{HZ} \textsc{J. Herzog, N. Zamani}, Duality and vanishing of
	generalized local cohomology, Arch. Math. (Basel) \textbf{81} (2003), 512-519.	

\bibitem{hule}{\sc C. Huneke, G. J. Leuschke}, On a conjecture of Auslander and Reiten, J. Algebra \textbf{275} (2004), 781--790.




\bibitem{24h}{\sc S. Iyengar, G. J. Leuschke, A. Leykin, C. Miller, E. Miller, A. K. Singh, U. Walther}, \textsl{Twenty-four
hours of local cohomology}, Grad. Stud. Math. {\bf 87}, Amer. Math. Soc. 2007.



\bibitem{jor}{\sc  D. A. Jorgensen}, Finite projective dimension and the vanishing of ${\rm Ext}_R(M,M)$, Comm. Algebra \textbf{36}
(2008), 4461--4471.




\bibitem{Cle-Vic}{\sc  V. H. Jorge-P\'erez, C. B. Miranda-Neto}, Criteria for prescribed bound on  projective dimension, Comm. Algebra \textbf{49}
(2021), 2505--2515.

	
\bibitem{PS} \textsc{C. Peskine, L. Szpiro,} Dimension projective finie
	et cohomologie locale, Publ. Math. Inst. Hautes \'Etudes Sci. {\bf 42} (1973), 323--395.
		
\bibitem{pR} \textsc{P. Roberts}, Le th\'eor\`eme d` intersection, C. R.
	Acad. Sc. Paris S\'er. I \textbf{304} (1987), 177--180.
	
\bibitem{pS}\textsc{P. Schenzel}, \textsl{On the use of local cohomology
	in algebra and geometry}, in: J. Elias, J.M. Giral, R. M. Mir\'o-Roig, S. Zarzuela (Eds.), Six Lectures in Commutative Algebra, Proceed. Summer School on Commutative Algebra at Centre de Recerca Matem\`atica, in: Progr. Math., \textbf{166}, Birkh\"auser, (1998), 241--292.
	
	
	

\bibitem{SS} \textsc{P. Schenzel, A.-M. Simon,}	\textsl{Completion, \v{C}ech and local homology and cohomology interactions between them}, Springer Monogr. Math. (2018). 
	

	
\bibitem{rS} \textsc{R. Y. Sharp,} Finitely generated modules of
	finite injective dimension over certain Cohen-Macaulay rings, Proc.
	London Math. Soc. \textbf{25} (1972), 303--328.
	
\bibitem{nS} \textsc{N. Suzuki}, On the generalized local cohomology
	and its duality, J. Math. Kyoto Univ. {\bf 18} (1978), 71--85.
	
	
\bibitem{yassemi}\textsc{S. Yassemi}, \textsl{Generalized section functors}, J. Pure Appl. Algebra \textbf{95} (1994), 103--119.	
	
\bibitem{Yoshida} \textsc{K. Yoshida}, Tensor products of perfect modules and maximal surjective Buchsbaum modules, J. Pure Appl. Algebra {\bf 123} (1998), 313--323.	
	
\end{thebibliography}
\end{document}